\theoremstyle{plain}
\newtheorem{lem}{Lemma}[section]
\newtheorem{cor}[lem]{Corollary}
\newtheorem{thm}[lem]{Theorem}
\theoremstyle{definition}
\newtheorem{defn}[lem]{Definition}
\newtheorem{ex}[lem]{Example}
\newtheorem{rmk}[lem]{Remark}
\newtheorem{fact}[lem]{Fact}
\newtheorem{para}[lem]{}
\newcommand{\cat}[1]{\mathcal{#1}}
\newcommand{\catd}{\cat{D}}
\newcommand{\cata}{\cat{A}}
\newcommand{\catb}{\cat{B}}
\newcommand{\rank}{\operatorname{rank}}
\newcommand{\ann}{\operatorname{Ann}}
\newcommand{\HH}{\operatorname{H}}
\newcommand{\Hom}{\operatorname{Hom}}	
\newcommand{\spec}{\operatorname{Spec}}
\newcommand{\shift}{\mathsf{\Sigma}}
\newcommand{\cone}{\operatorname{Cone}}
\newcommand{\ideal}[1]{\mathfrak{#1}}
\newcommand{\m}{\ideal{m}}
\newcommand{\p}{\ideal{p}}
\newcommand{\ol}{\overline}
\newcommand{\supp}{\operatorname{Supp}}
\newcommand{\bbz}{\mathbb{Z}}
\newcommand{\xra}{\xrightarrow}
\newcommand{\into}{\hookrightarrow}
\renewcommand{\geq}{\geqslant}
\renewcommand{\leq}{\leqslant}
\newcommand{\Ext}[4][R]{\operatorname{Ext}_{#1}^{#2}(#3,#4)}	
\newcommand{\Rhom}[3][R]{\mathbf{R}\!\operatorname{Hom}_{#1}(#2,#3)}	
\newcommand{\Lotimes}[3][R]{#2\otimes^{\mathbf{L}}_{#1}#3}
\newcommand{\Otimes}[3][R]{#2\otimes_{#1}#3}
\renewcommand{\Hom}[3][R]{\operatorname{Hom}_{#1}(#2,#3)}	
\newcommand{\Tor}[4][R]{\operatorname{Tor}^{#1}_{#2}(#3,#4)}
\newcommand{\ssm}{\smallsetminus}
\newcommand{\sd}{\operatorname{SD}}
\numberwithin{equation}{lem}
\begin{document}

\bibliographystyle{amsplain}

\author{Sean Sather-Wagstaff}

\address{NDSU Department of Mathematics \# 2750,
PO Box 6050,
Fargo, ND 58108-6050
USA}

\email{sean.sather-wagstaff@ndsu.edu}

\urladdr{http://www.ndsu.edu/pubweb/\~{}ssatherw/}

\title{Reflexivity and connectedness}

\date{\today}

\keywords{Auslander classes, Bass classes, derived reflexive complexes, disconnected prime spectrum, 
Foxby classes, 
semidualizing complexes, semidualizing modules, totally reflexive modules}
\subjclass[2010]{13D02,13D07, 13D09, 13G05}

\begin{abstract}
Given a finitely generated module
over a commutative noetherian ring that satisfies certain reflexivity conditions,
we show how  failure of the semidualizing property for the module manifests 
in a disconnection of the prime spectrum of the ring.
\end{abstract}

\maketitle

\section{Introduction} \label{sec0}

Throughout this paper, $R$ is a non-zero commutative noetherian ring with identity and all $R$-modules are unital.

An $R$-module $A$ is a \emph{semidualizing} if
the natural homothety map $\chi^R_A\colon R\to\Hom AA$ is an isomorphism
and $\Ext iAA=0$ for all $i\geq 1$.
These gadgets, and their cousins the semidualizing complexes, are useful for studying dualities.
For instance, their applications include Grothendieck's local duality~\cite{hartshorne:rad, hartshorne:lc},
progress by Avramov-Foxby~\cite{avramov:rhafgd} and Sather-Wagstaff~\cite{sather:cidfc}
on the composition question for local ring homomorphisms of finite G-dimension,
and progress by Sather-Wagstaff~\cite{sather:bnsc} on Huneke's question on the behavior of
Bass numbers of local rings.

The starting point for the current paper is the following straightforward result:

\begin{fact}\label{fact121124a}
For a finitely generated $R$-module $A$,  the next conditions are equivalent:
\begin{enumerate}[(i)]
\item \label{fact121124a1}
$A$ is a \emph{semidualizing} $R$-module, 
\item \label{fact121124a2}
$R$ is a \emph{totally $A$-reflexive} $R$-module, i.e., the natural biduality map $\delta^R_A\colon R\to\Hom{\Hom RA}A$ is an isomorphism
and $\Ext i{\Hom RA}A=0=\Ext iR{A}$ for all $i\geq 1$, and
\item \label{fact121124a3}
$A$ is totally $A$-reflexive and $\ann_R(A)=0$.
\end{enumerate}
\end{fact}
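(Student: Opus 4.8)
The plan is to route everything through the canonical evaluation isomorphism $\Hom RA\cong A$, $\psi\mapsto\psi(1)$, together with the fact that $\Ext iRA=0$ for all $i\geq 1$ because $R$ is free. Under this identification the biduality map $\delta^R_A$ is carried onto the homothety $\chi^R_A$: writing $\psi_a\in\Hom RA$ for the map $r\mapsto ra$, one has $\delta^R_A(r)(\psi_a)=\psi_a(r)=ra=\chi^R_A(r)(a)$. Consequently condition (ii) unwinds verbatim to the statement ``$\chi^R_A$ is an isomorphism and $\Ext iAA=0$ for all $i\geq 1$'', which is condition (i); this settles (i)$\Leftrightarrow$(ii) with essentially no work.

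For the equivalence with (iii) I would first isolate one auxiliary map. Put $B:=\Hom AA$, regard it as an $R$-algebra through $\chi:=\chi^R_A$ (so $\chi(1)=\id_A=1_B$), and let $\rho\colon\Hom BA\to A$ be evaluation at $\id_A$; under $\Hom RA\cong A$ this $\rho$ is precisely the map that $\chi$ induces on $\Hom{-}A$. The only computation needed is $\rho\circ\delta^A_A=\id_A$, which is immediate from $\delta^A_A(a)(\id_A)=a$. This has two consequences: if $\delta^A_A$ is an isomorphism then $\rho$ is its inverse, and if $\chi^R_A$ is an isomorphism then $\rho$ is an isomorphism and hence so is $\delta^A_A=\rho^{-1}$. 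Granting this, (i)$\Rightarrow$(iii) is quick: $\chi^R_A$ injective forces $\ann_R(A)=0$; the isomorphism $\chi^R_A\colon R\xra{\;\cong\;}B$ gives $\Ext i{\Hom AA}A\cong\Ext iRA=0$ for $i\geq 1$; and $\delta^A_A$ is an isomorphism by the remark above; so $A$ is totally $A$-reflexive.

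The real content is (iii)$\Rightarrow$(i), that is, showing $\chi^R_A$ is an isomorphism; injectivity is $\ann_R(A)=0$ once more, so the issue is surjectivity. Set $Q:=\coker(\chi\colon R\to B)$, a finitely generated $R$-module, and apply $\Hom{-}A$ to the exact sequence $0\to R\xra{\chi}B\to Q\to 0$. Because $\delta^A_A$ is an isomorphism, the map $\Hom BA\to\Hom RA$ in the long exact sequence is the isomorphism $\rho$; so $\Hom QA=0$, and feeding in $\Ext iRA=0$ together with $\Ext iBA=\Ext i{\Hom AA}A=0$ for $i\geq 1$ (part of ``$A$ is totally $A$-reflexive'') yields $\Ext iQA=0$ for every $i\geq 0$. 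Since $Q$ is a quotient of $B=\Hom AA$ we have $\supp Q\subseteq\supp A$, and the total vanishing of the $\Ext iQA$ then forces $Q=0$ by standard properties of grade: $\grade(\ann_R(Q),A)=\infty$, so $(\ann_R Q)\cdot A=A$, whence $\supp Q\cap\supp A=\varnothing$ and therefore $Q=0$. (Alternatively, localize at a minimal prime $\p$ of $\supp Q$, where $Q_\p\neq 0$ has finite length and $A_\p\neq 0$, and a short length induction produces $\Ext[R_\p]{i}{Q_\p}{A_\p}\neq 0$ for $i=\depth_{R_\p}(A_\p)$.) Thus $\chi^R_A$ is bijective, and combined with $\Ext iAA=0$ for $i\geq 1$ this says $A$ is semidualizing.

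I expect that last deduction---getting $Q=0$ out of the simultaneous vanishing of all $\Ext iQA$ and the inclusion $\supp Q\subseteq\supp A$---to be the only genuinely homological point in the argument; the rest is formal manipulation of the homothety, biduality, and evaluation maps and of the freeness of $R$.
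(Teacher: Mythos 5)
Your proof is correct, and it is worth noting that the paper never actually proves this Fact: it is billed as the ``straightforward'' starting point, the justification implicitly in mind being that conditions \eqref{fact121124a2} and \eqref{fact121124a3} say precisely that $R$, respectively $A$, is derived $A$-reflexive (with $\supp_R(A)=\spec(R)$ supplied by $\ann_R(A)=0$ in case \eqref{fact121124a3}), so the equivalence with \eqref{fact121124a1} is the module-level instance of Fact~\ref{prop121126a}\eqref{prop121126a4} and~\eqref{prop121126a5}, quoted from Avramov--Iyengar--Lipman and reproved locally in Theorem~\ref{thm121124e}. Your route is genuinely different and entirely elementary. The identification of $\delta^R_A$ with $\chi^R_A$ under $\Hom RA\cong A$ settles \eqref{fact121124a1}$\Leftrightarrow$\eqref{fact121124a2} exactly as it should, and your handling of \eqref{fact121124a3}$\Rightarrow$\eqref{fact121124a1} via the cokernel $Q$ of the homothety is the module-theoretic shadow of the paper's argument for \eqref{thm121124e3}$\Rightarrow$\eqref{thm121124e1} in Theorem~\ref{thm121124e}: where the paper shows $\Rhom{\chi^R_A}{A}$ is an isomorphism and then cancels $\Rhom{-}{A}$ using the local Poincar\'e/Bass-series Lemmas~\ref{lem121125a} and~\ref{lem121125b}, you show $\Hom{\chi^R_A}{A}$ is an isomorphism, read off $\Ext iQA=0$ for all $i\geq 0$ from the long exact sequence, and cancel via Rees's theorem ($\grade(\ann_R(Q),A)=\infty$ forces $(\ann_R(Q))A=A$) together with $\supp_R(Q)\subseteq\supp_R(A)$; that support containment (via $Q$ being a quotient of $\Hom AA$) is the one point you should make sure to state, and it holds since $\ann_R(A)$ kills $\Hom AA$. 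The hypothesis $\ann_R(A)=0$ is what lets you run this globally rather than localizing; indeed the last step closes even faster by the determinant trick, since $(\ann_R(Q))A=A$ yields $x\in\ann_R(Q)$ with $1-x\in\ann_R(A)=0$, whence $Q=0$. What your approach buys is a self-contained proof requiring nothing beyond Rees's theorem; what the paper's implicit route buys is that the Fact falls out of machinery it must quote anyway.
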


It is straightforward to show that the annihilator condition in item~\eqref{fact121124a3} is necessary:
if $A$ is totally $A$-reflexive, then $A$ need not be semidualizing.
For instance, if $A=0$, then $A$ is totally $A$-reflexive but is not semidualizing.
A slightly less trivial example is the following:

\begin{ex}\label{ex121124a}
Let $R_1,R_2$ be non-zero commutative noetherian rings with identity, and 
set $R=R_1\times R_2$. Then the $R$-module $A=R_1\times 0$ is totally $A$-reflexive
but is not semidualizing. Moreover, given any semidualizing $R_1$-module $A_1$,
the $R$-module $A=A_1\times 0$ is totally $A$-reflexive
but is not semidualizing.
\end{ex}

The point of this paper is to show that this is the only way for this to occur. 
Specifically, we prove the following in~\ref{proof121130a}:

\begin{thm}\label{thm121124a}
Let $A$ be a non-zero finitely generated $R$-module that is totally $A$-reflexive and not semidualizing.
Then there are  commutative noetherian rings $R_1,R_2\neq 0$ with identity
such that $R\cong R_1\times R_2$, and there is a semidualizing $R_1$-module $A_1$
such that $A\cong A_1\times 0$. In particular, $\spec(R)$ is disconnected.
\end{thm}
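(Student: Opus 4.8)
The plan is to reduce the theorem to the single assertion that the ideal $\fa:=\ann_R(A)$ is generated by an idempotent. First note $0\neq\fa\neq R$: indeed $\fa\neq R$ because $A\neq 0$, and $\fa\neq 0$ because otherwise $A$ would be semidualizing by Fact~\ref{fact121124a}. Suppose we have produced an idempotent $e$ with $\fa=eR$. Then $R\cong R_1\times R_2$ with $R_1=R/\fa$ and $R_2=\fa$ (viewed as rings), both nonzero; since $\fa$ kills $A$, the module $A$ is an $R_1$-module $A_1$ with $A\cong A_1\times 0$ and $\ann_{R_1}(A_1)=0$. Because $\Hom$, $\Ext$, and the biduality map all decompose along the product decomposition of $R$, total $A$-reflexivity of $A$ over $R$ is equivalent to total $A_1$-reflexivity of $A_1$ over $R_1$; then Fact~\ref{fact121124a} makes $A_1$ semidualizing over $R_1$, and $\spec R$ is visibly disconnected. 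So all the work lies in producing the idempotent $e$.

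The key step — and the one I expect to be the main obstacle — is to prove that $\Ext i{\fa}A=0$ for all $i\geq 0$. Here I would bring in the endomorphism ring $E=\Hom AA$; since $\fa E=0$, $E$ is an $R/\fa$-module, and the homothety map factors as a surjection $R\onto R/\fa$ followed by an injection $R/\fa\into E$ (injective as $A$ is faithful over $R/\fa$), with cokernel $T$, giving a short exact sequence $0\to R/\fa\to E\to T\to 0$. The biduality isomorphism $\delta^A_A\colon A\xrightarrow{\cong}\Hom{\Hom AA}A=\Hom EA$ identifies $A$ with $\Hom EA$, and a short computation shows its inverse is the restriction map $\Hom EA\to\Hom{R/\fa}A\cong A$, which is therefore an isomorphism. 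Applying $\Hom{-}A$ to $0\to R/\fa\to E\to T\to 0$ and using the vanishing $\Ext iEA=0$ for $i\geq 1$ (part of total $A$-reflexivity) then forces $\Ext iTA=0$ for every $i\geq 0$, whence $\Ext i{R/\fa}A=0$ for $i\geq 1$. Finally, applying $\Hom{-}A$ to $0\to\fa\to R\to R/\fa\to 0$ and using that $R$ is free together with the vanishing just obtained yields $\Ext i{\fa}A=0$ for all $i\geq 0$.

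Granting this vanishing, I would next settle the case where $(R,\m)$ is local, showing $\fa=0$. Assume not; then $\fa\subseteq\m$ and $\fa\neq 0$. Put $d=\depth_R(A)$ and kill a maximal $A$-regular sequence in $\m$ to obtain a nonzero module $\overline{A}$ with $\fa\overline{A}=0$ and $\depth_R\overline{A}=0$; the vanishing $\Ext i{\fa}A=0$ is inherited under each quotient by a regular element, so $\Hom{\fa}{\overline{A}}=0$. But $\depth_R\overline{A}=0$ yields a nonzero map $R/\m\into\overline{A}$, while Nakayama (using $0\neq\fa\subseteq\m$) yields a surjection $\fa\onto R/\m$; composing produces a nonzero element of $\Hom{\fa}{\overline{A}}$, a contradiction. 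Hence over a local ring $\fa=0$, i.e., a nonzero totally $A$-reflexive module over a local ring is semidualizing.

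The global conclusion now follows by localizing: for each $\p\in\spec R$, the module $A_\p$ is finitely generated and totally $A_\p$-reflexive over $R_\p$, so by the local case either $A_\p=0$ (hence $\fa_\p=R_\p$) or $A_\p\neq 0$ is semidualizing over $R_\p$ (hence $\fa_\p=\ann_{R_\p}(A_\p)=0$). Thus $R/\fa$ is a finitely generated $R$-module that is locally free, hence projective, so the surjection $R\onto R/\fa$ splits and $\fa$ is generated by an idempotent. Combined with the first paragraph this proves the theorem. Of the four steps, the first and last are routine bookkeeping, the local case is a short socle computation once the vanishing is known, and essentially all of the content is in establishing $\Ext i{\fa}A=0$.
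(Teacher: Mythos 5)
Your overall architecture (split the idempotent generating $\ann_R(A)$, rather than the paper's route of showing the semidualizing locus is clopen) is reasonable, and your steps 1, 3, and 4 are essentially correct: the reduction to idempotent generation, the depth-and-socle contradiction in the local case, and the globalization via projectivity of $R/\fa$ all work \emph{granting} the Ext-vanishing. But the step you yourself flag as the main obstacle is not established, and the proof collapses there. Applying $\Hom{-}{A}$ to $0\to R/\fa\to E\to T\to 0$ and using that $\Hom EA\to\Hom{R/\fa}A$ is an isomorphism together with $\Ext iEA=0$ for $i\geq 1$ yields only $\Hom TA=0=\Ext 1TA$, plus isomorphisms $\Ext i{R/\fa}A\cong\Ext {i+1}TA$ for all $i\geq 1$. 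Nothing forces $\Ext iTA=0$ for $i\geq 2$: the long exact sequence is circular at that point, since the only handle on $\Ext {i+1}TA$ is that it is isomorphic to the group $\Ext i{R/\fa}A$ you are trying to kill. In particular even $\Hom{\fa}A\cong\Ext 1{R/\fa}A\cong\Ext 2TA$ is not shown to vanish, so the socle contradiction in your local step has no input, and the whole chain breaks.

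The missing ingredient is exactly the non-formal lemma the paper supplies. Since total $A$-reflexivity gives $\Ext iAA=0$ for $i\geq1$, your hypotheses say precisely that $\Rhom{\chi^R_A}{A}$ is an isomorphism, i.e.\ $\Rhom{\cone(\chi^R_A)}{A}\simeq 0$, where $\cone(\chi^R_A)$ is a two-term complex with homology $T$ in degree $0$ and $\fa$ in degree $1$. The paper passes from this to $\cone(\chi^R_A)\simeq 0$ via Lemmas~\ref{lem121125a} and~\ref{lem121125b}, whose proof rests on the Poincar\'e/Bass series computations of Avramov--Foxby --- a minimal-free-resolution, Nakayama-type statement for \emph{complexes}. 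Your socle argument in the local step is in fact a correct module-level analogue of that lemma for a single module, but by splitting $\cone(\chi^R_A)$ into the two modules $\fa$ and $T$ and treating them with separate short exact sequences you lose exactly the information needed to conclude. If you replace your Ext-vanishing step by the statement ``$B$ homologically finite, $A\not\simeq 0$, and $\Rhom BA\simeq 0$ imply $B\simeq 0$ over a local ring,'' applied to $B=\cone(\chi^R_A)$, the remainder of your proof does go through and gives a legitimate alternative route to the theorem; without it, the argument as written is incomplete.
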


If $R$ is local or a domain, then $\spec(R)$ is connected. Hence, if $A$ is non-zero and totally $A$-reflexive,
then $A$ must be semidualizing. The local version of this is actually a key point of the proof of Theorem~\ref{thm121124a},
which is contained in Theorem~\ref{thm121124e} below. The version for domains is documented in Corollary~\ref{cor121130a}.
Note that our results also give other conditions on $A$ that imply that $\spec(R)$ is disconnected or that $A$ is semidualizing.
These conditions are akin to those studied in~\cite{avramov:rrc1,frankild:rbsc}.
\section{Background}\label{sec121130a}

We begin this section with some background information.

\begin{para}\label{para121124a}
We work in the derived category $\catd(R)$ where each $R$-complex $X$ is indexed homologically:
$X=\cdots\to X_i\xra{\partial^X_i}X_{i-1}\to\cdots$. 
An $R$-complex $X$ is \emph{homologically bounded}
if  $\HH_i(X)=0$ for all but finitely many $i$.
The complex $X$ is \emph{homologically finite}
if it is homologically bounded and $\HH_i(X)$ is finitely generated for all $i$.
The $i$th \emph{suspension} of $X$ is $\shift^iX$.
Isomorphisms in $\catd(R)$ are identified with the symbol $\simeq$.
Two $R$-complexes $X$ and $Y$ are \emph{shift-isomorphic}, written $X\sim Y$, if there is an integer $i$ such that
$X\simeq\shift^iY$. 
The \emph{large support} of $X$ is
$\supp_R(X):=\{\p\in\spec(R)\mid X_{\p}\not\simeq 0\}$.
Given two $R$-complexes $X$ and $Y$, the \emph{right derived Hom complex} and \emph{left derived tensor product complex}
of $X$ and $Y$ are denoted $\Rhom XY$ and $\Lotimes XY$,
and $\Ext iXY:=\HH^i(\Rhom XY)$.

If $(R,\m,k)$ is local,
then the \emph{Bass series} and \emph{Poincar\'e series} of a homologically finite $R$-complex $X$ are the formal Laurent series
\begin{align*}
I_R^X(t)&=\sum_{i\in\bbz}\rank_k(\HH^i(\Rhom kX))t^i\\
P^R_X(t)&=\sum_{i\in\bbz}\rank_k(\HH^i(\Lotimes kX))t^i.
\end{align*}
\end{para}

Semidualizing complexes and the various classes that they define  originate in work of
Auslander-Bridger~\cite{auslander:adgeteac,auslander:smt},
Avramov-Foxby~\cite{avramov:rhafgd},
Christensen~\cite{christensen:scatac},
Enochs-Jenda-Xu~\cite{enochs:fdgipm},
Foxby~\cite{foxby:gmarm,foxby:gdcmr}, 
Golod~\cite{golod:gdagpi}, 
Vasconcelos~\cite{vasconcelos:dtmc}, and
Yassemi~\cite{yassemi:gd}.

\begin{defn}\label{defn121124a}
Let $A,N$ be $R$-complexes.
\begin{enumerate}[(a)]
\item\label{defn121124a1}
$A$ is \emph{semidualizing} if it is homologically finite and the natural homothety morphism
$\chi^R_A\colon R\to\Rhom AA$ is an isomorphism in $\catd(R)$. 
\item
$A$ is \emph{tilting} if it is semidualizing and has finite projective dimension.
\item\label{defn121124a2}
$N$ is \emph{derived $A$-reflexive} if $N$ and $\Rhom NA$ are homologically finite and the natural biduality morphism
$\delta^A_N\colon N\to\Rhom {\Rhom NA}A$ is an isomorphism in $\catd(R)$.
\item\label{defn121124a3}
$N$ is in the \emph{Bass class} $\catb_A(R)$ if $N$ and $\Rhom AN$ are homologically bounded and the natural
evaluation morphism
$\xi^A_N\colon\Lotimes{A}{\Rhom AN}\to N$ is an isomorphism in $\catd(R)$.
\item\label{defn121124a4}
$N$ is in the \emph{Auslander class} $\cata_A(R)$ if $N$ and $\Lotimes AN$ are homologically bounded and the natural
 morphism
$\gamma^A_N\colon N\to \Rhom{A}{\Lotimes AN}$ is an isomorphism in $\catd(R)$.
\end{enumerate}
\end{defn}

The following facts are straightforward to verify.

\begin{fact}\label{fact121124b}
Let $A$ be a finitely generated $R$-module, and let $N$ be an $R$-module.
\begin{enumerate}[(a)]
\item\label{fact121124b1}
$A$ is semidualizing as an $R$-complex if and only if it satisfies the conditions in Fact~\ref{fact121124a}\eqref{fact121124b1}.
\item
$A$ is tilting as an $R$-complex if and only if it is a rank-1 projective $R$-module.
\item\label{fact121124b2}
An $R$-module that is totally $A$-reflexive (as in Fact~\ref{fact121124a}\eqref{fact121124b2}) is derived $A$-reflexive.
If $N$ has a finite resolution by totally $A$-reflexive $R$-modules, then it is derived $A$-reflexive; the converse holds when $A$ is semidualizing as in~\cite{yassemi:gd}.
\item\label{fact121124b3}
If the natural map $\Otimes{A}{\Hom AN}\to N$ is bijective and $\Ext iAN=0=\Tor iA{\Hom AN}$ for all $i\geq 1$, then
$N\in\catb_A(R)$; the converse holds when $A$ is semidualizing by~\cite[(4.10) Observation]{christensen:scatac}.
\item\label{fact121124b4}
If the natural map $N\to \Hom{A}{\Otimes AN}$ is bijective and $\Tor iAN=0=\Ext iA{\Otimes AN}$ for all $i\geq 1$, then
$N\in\cata_A(R)$; the converse holds when $A$ is semidualizing by~\cite[(4.10) Observation]{christensen:scatac}.
\end{enumerate}
\end{fact}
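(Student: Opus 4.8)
The plan is to deduce all five parts from one principle. A finitely generated $R$-module $M$, viewed as a complex concentrated in degree $0$, satisfies $\HH^i(\Rhom MX)=\Ext iMX$ and $\HH_i(\Lotimes MX)=\Tor iMX$ for every $R$-module $X$ (both vanishing for $i<0$), and the natural morphisms $\chi^R_A$, $\delta^A_N$, $\xi^A_N$, $\gamma^A_N$ of $\catd(R)$ restrict in homological degree $0$ to the corresponding module-level natural maps: the homothety $R\to\Hom AA$, the biduality map $N\to\Hom{\Hom NA}A$, the tensor-evaluation $\Otimes A{\Hom AN}\to N$, and the map $N\to\Hom A{\Otimes AN}$. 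So for modules, the statement that one of these morphisms is an isomorphism in $\catd(R)$ is equivalent to the statement that its degree-$0$ module map is bijective and the relevant higher $\Ext$ or $\Tor$ modules vanish, and each item unwinds routinely. The converses asserted in (b)--(e) are the quoted results of Yassemi and of Christensen, so only the forward directions need argument.

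For (a): finite generation makes $A$ homologically finite, and $\chi^R_A$ is an isomorphism in $\catd(R)$ precisely when the homothety $R\to\Hom AA$ is bijective and $\Ext iAA=0$ for all $i\geq1$, which is the module-theoretic definition of semidualizing recalled in Fact~\ref{fact121124a}. For (b): by (a), $A$ is tilting as a complex iff $A$ is a semidualizing module with $\pd_R A<\infty$; a rank-$1$ projective module has $\Hom AA\cong R$ (check locally), $\Ext iAA=0$ for $i\geq1$, and $\pd_R A=0$, so it is tilting. Conversely, such an $A$ is a perfect complex with $\Rhom AA\simeq\Lotimes{\Rhom AR}A\simeq R$, so $A$ is invertible for the derived tensor product in $\catd(R)$; an invertible complex that happens to be a module is a rank-$1$ projective module, since localizing at any prime $\p$ reduces one to the local ring $R_\p$, over which an invertible complex is a shift of the ring and the only such shift that is a module is the ring itself.

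For (c): $\Ext iNA=0$ for $i\geq1$ yields $\Rhom NA\simeq\Hom NA$, a finitely generated module, so $N$ and $\Rhom NA$ are homologically finite; then $\Ext i{\Hom NA}A=0$ for $i\geq1$ yields $\Rhom{\Rhom NA}A\simeq\Hom{\Hom NA}A$, under which $\delta^A_N$ becomes the module biduality map, bijective by hypothesis; hence a totally $A$-reflexive module is derived $A$-reflexive. The resolution statement then follows by induction on the length of a resolution $0\to G_n\to\cdots\to G_0\to N\to0$ by totally $A$-reflexive modules: the case $n=0$ is the preceding sentence, and in general, with $K=\im(G_1\to G_0)$, the module $K$ is derived $A$-reflexive by the inductive hypothesis applied to the truncated resolution $0\to G_n\to\cdots\to G_1\to K\to0$, and the short exact sequence $0\to K\to G_0\to N\to0$ induces a distinguished triangle $K\to G_0\to N\to\shift K$ in $\catd(R)$, from which $N$ is derived $A$-reflexive because the derived $A$-reflexive complexes are closed under cones. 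Parts (d) and (e) run the same way: in (d), $\Ext iAN=0$ for $i\geq1$ lets one replace $\Rhom AN$ by $\Hom AN$, then $\Tor iA{\Hom AN}=0$ for $i\geq1$ lets one replace $\Lotimes A{\Hom AN}$ by $\Otimes A{\Hom AN}$, after which $\xi^A_N$ is the assumed bijection and $N,\Rhom AN$ are bounded, so $N\in\catb_A(R)$; in (e), $\Tor iAN=0$ for $i\geq1$ lets one replace $\Lotimes AN$ by $\Otimes AN$, then $\Ext iA{\Otimes AN}=0$ for $i\geq1$ lets one replace $\Rhom A{\Otimes AN}$ by $\Hom A{\Otimes AN}$, after which $\gamma^A_N$ is the assumed bijection, so $N\in\cata_A(R)$.

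The main obstacle, modest as it is, lies in the two structural inputs invoked above: confirming at the level of their explicit constructions that $\chi^R_A$, $\delta^A_N$, $\xi^A_N$, $\gamma^A_N$ really do reduce in homological degree $0$ to the named module maps, and the closure of the class of derived $A$-reflexive complexes under distinguished triangles in $\catd(R)$, used for the resolution statement in (c). Both are standard, which is exactly why the fact is ``straightforward to verify.''
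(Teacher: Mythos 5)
Your proposal is correct and is exactly the routine verification the paper has in mind when it calls these facts ``straightforward to verify'' (the paper supplies no proof): identify each derived natural morphism with its degree-zero module map plus the vanishing of the relevant higher $\ext$ or $\tor$, and handle the resolution statement in (c) by induction using the two-of-three property of derived $A$-reflexivity on distinguished triangles. The only point worth flagging is that in (c) your identification of $\Rhom NA$ with the finitely generated module $\Hom NA$ tacitly uses that a totally $A$-reflexive module is finitely generated, which is the standard convention implicit in the paper's statement as well.
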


\begin{lem}\label{lem121125a}
Assume that $R$ is local, and let $A$ and $B$ be  homologically finite $R$-complexes such that $A\not\simeq 0$.
Then the following conditions are equivalent:
\begin{enumerate}[\rm(i)]
\item \label{lem121125a1}
$B\simeq 0$,
\item \label{lem121125a2}
$\Lotimes AB\simeq 0$, 
\item \label{lem121125a3}
$\Rhom AB\simeq 0$, and
\item \label{lem121125a4}
$\Rhom BA\simeq 0$.
\end{enumerate}
\end{lem}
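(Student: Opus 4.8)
The plan is to reduce all four conditions to assertions over the residue field $k=R/\m$ and then exploit that a complex of $k$-vector spaces is formal. Two standard inputs are needed. First, derived Nakayama: for a homologically finite $R$-complex $X$ one has $X\simeq 0$ if and only if $\Lotimes kX\simeq 0$ if and only if $\Rhom kX\simeq 0$. The forward implications are trivial; the tensor converse follows from $\HH_{\inf X}(\Lotimes kX)\cong k\otimes_R\HH_{\inf X}(X)$ and ordinary Nakayama; and the Hom converse rests on the fact that a nonzero homologically finite complex over a local ring has finite depth, so $\Ext{i}{k}{X}\neq 0$ for $i=\depth_R X<\infty$. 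Second, over the field $k$: for complexes $V,W$ of $k$-vector spaces one has $\HH_n(\Lotimes[k]VW)\cong\bigoplus_{p+q=n}\HH_p(V)\otimes_k\HH_q(W)$ and $\HH_n(\Rhom[k]VW)\cong\prod_p\Hom[k]{\HH_p(V)}{\HH_{p+n}(W)}$; in particular $\Lotimes[k]VW\simeq 0$ and $\Rhom[k]VW\simeq 0$ each force $V\simeq 0$ or $W\simeq 0$, and since these are non-vanishing statements they require no boundedness of $V$ or $W$.

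Granting this, the implications (i)$\Rightarrow$(ii), (iii), (iv) are immediate, as $\Lotimes A0\simeq 0\simeq\Rhom A0\simeq\Rhom 0A$. Since $A$ is homologically finite with $A\not\simeq 0$, derived Nakayama gives $\Lotimes kA\not\simeq 0$ and $\Rhom kA\not\simeq 0$. For (ii)$\Rightarrow$(i): applying $\Lotimes k-$ and associativity gives $\Lotimes[k]{(\Lotimes kA)}{(\Lotimes kB)}\simeq\Lotimes k{(\Lotimes AB)}\simeq 0$, so the field step forces $\Lotimes kB\simeq 0$, whence $B\simeq 0$. For (iii)$\Rightarrow$(i): applying $\Rhom k-$ and Hom-tensor adjunction through the intermediate $\Rhom{\Lotimes kA}{B}$ gives $\Rhom[k]{(\Lotimes kA)}{(\Rhom kB)}\simeq\Rhom k{(\Rhom AB)}\simeq 0$, so the field step forces $\Rhom kB\simeq 0$, whence $B\simeq 0$. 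For (iv)$\Rightarrow$(i): the analogous manipulation gives $\Rhom[k]{(\Lotimes kB)}{(\Rhom kA)}\simeq\Rhom k{(\Rhom BA)}\simeq 0$, and since $\Rhom kA\not\simeq 0$ the field step forces $\Lotimes kB\simeq 0$, whence $B\simeq 0$.

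The only point needing care is the bookkeeping of the $k$-module structure when moving between the $R$- and $k$-worlds: one rewrites $\Lotimes k{\Lotimes AB}$ via associativity and $\Rhom{\Lotimes kA}{B}$ via $\Lotimes kA\simeq\Lotimes[k]{(\Lotimes kA)}{k}$ together with adjunction along $R\to k$. It is also worth recording that $\Lotimes kA$, $\Rhom kA$, and $\Rhom kB$ need not be homologically bounded in general, which is why the field step must be stated as a pure non-vanishing assertion. Beyond this formal manipulation in $\catd(R)$, all the genuine content lies in derived Nakayama, and the single external ingredient I would cite rather than reprove is the finiteness of depth of a nonzero homologically finite complex used in its Hom half.
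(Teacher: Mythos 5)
Your proof is correct and follows essentially the same route as the paper: the paper simply cites the Poincar\'e and Bass series formulas of Avramov--Foxby, which are exactly the numerical shadows of the K\"unneth-type isomorphisms $\Lotimes k{(\Lotimes AB)}\simeq\Lotimes[k]{(\Lotimes kA)}{(\Lotimes kB)}$ and $\Rhom k{\Rhom AB}\simeq\Rhom[k]{\Lotimes kA}{\Rhom kB}$ that you establish directly. The only difference is that you unpack that citation into a self-contained reduction to the residue field, including the derived Nakayama statement (whose $\rhom$ half, resting on finiteness of depth for a nonzero homologically finite complex, is indeed the one genuinely external input).
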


\begin{proof}
For $n=$ii,iii,iv, the implications \eqref{lem121125a1}$\implies(n)$ are standard.
For the converses, we suppose that $B\not\simeq 0$, and
conclude that $\Lotimes AB\not\simeq 0$, 
$\Rhom AB\not\simeq 0$, and
$\Rhom BA\not\simeq 0$.
For instance, this follows from the Bass series and Poincar\'e series computations in~\cite[Lemma (1.5.3)]{avramov:rhafgd}.
\end{proof}

\begin{lem}\label{lem121125b}
Assume that $R$ is local, and let $A$, $X$, and $Y$ be  homologically finite $R$-complexes such that $A\not\simeq 0$.
Given a morphism $f\colon X\to Y$ the following conditions are equivalent:
\begin{enumerate}[\rm(i)]
\item \label{lem121125b1}
$f$ is an isomorphism in $D(R)$,
\item \label{lem121125b2}
$\Lotimes Af$ is an isomorphism in $D(R)$, 
\item \label{lem121125b3}
$\Rhom Af$ is an isomorphism in $D(R)$, and
\item \label{lem121125b4}
$\Rhom fA$ is an isomorphism in $D(R)$.
\end{enumerate}
\end{lem}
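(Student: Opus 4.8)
The plan is to reduce the statement to Lemma~\ref{lem121125a} by replacing $f$ with its mapping cone. Set $C=\cone(f)$, so there is a distinguished triangle $X\xra{f}Y\to C\to\shift X$ in $\catd(R)$. First I would record two easy facts: since $X$ and $Y$ are homologically finite, the long exact sequence in homology associated to this triangle shows that $C$ is homologically finite; and $f$ is an isomorphism in $\catd(R)$ if and only if $C\simeq 0$. This turns condition~\eqref{lem121125b1} into the assertion $C\simeq 0$.

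Next I would push the triangle through the three functors, using that $\Lotimes A-$ and $\Rhom A-$ are exact and that $\Rhom-A$ is exact and contravariant. Applying $\Lotimes A-$ gives a distinguished triangle $\Lotimes AX\xra{\Lotimes Af}\Lotimes AY\to\Lotimes AC\to\shift(\Lotimes AX)$, whence $\cone(\Lotimes Af)\simeq\Lotimes AC$ and so~\eqref{lem121125b2} becomes $\Lotimes AC\simeq 0$; the identical argument with $\Rhom A-$ turns~\eqref{lem121125b3} into $\Rhom AC\simeq 0$. For~\eqref{lem121125b4} I would apply the contravariant functor $\Rhom-A$ to the triangle and rotate: this produces a distinguished triangle in which $\Rhom fA\colon\Rhom YA\to\Rhom XA$ is one of the maps and $\shift\Rhom CA$ is the third term, so $\Rhom fA$ is an isomorphism if and only if $\Rhom CA\simeq 0$.

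Finally I would invoke Lemma~\ref{lem121125a} with $B$ replaced by the homologically finite complex $C$ (valid since $A\not\simeq 0$): the four conditions $C\simeq 0$, $\Lotimes AC\simeq 0$, $\Rhom AC\simeq 0$, $\Rhom CA\simeq 0$ are equivalent, and by the previous two paragraphs these are exactly~\eqref{lem121125b1}--\eqref{lem121125b4}. The only place demanding a little care is the bookkeeping for the induced maps---chiefly correctly identifying the cone of $\Rhom fA$, where the contravariance reverses the triangle and introduces a suspension---together with the observation that homological finiteness descends to $C$; there is no substantive obstacle beyond this, the real content having been isolated in Lemma~\ref{lem121125a}.
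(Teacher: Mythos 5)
Your proposal is correct and is exactly the paper's argument: the paper's entire proof reads ``Apply Lemma~\ref{lem121125a} to the mapping cone $B:=\cone(f)$,'' and you have simply supplied the routine bookkeeping (homological finiteness of the cone, identification of the cones of the induced maps, and the rotation needed for the contravariant case) that the paper leaves implicit.
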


\begin{proof}
Apply Lemma~\ref{lem121125a} to the mapping cone $B:=\cone(f)$.
\end{proof}

\begin{fact}\label{prop121126a}
Let $A$ a homologically finite $R$-complex.
Then the following conditions are equivalent:
\begin{enumerate}[\rm(i)]
\item \label{prop121126a1}
$A$ is semidualizing over $R$,
\item \label{prop121126a2}
there is an isomorphism $\Rhom AA\simeq R$ in $D(R)$, 
\item \label{prop121126a3}
for each maximal ideal $\m\subset R$, there is an isomorphism $\Rhom[R_{\m}] {A_{\m}}{A_{\m}}\simeq R_{\m}$ in $D(R_{\m})$, 
\item \label{prop121126a4}
$R$ is derived $A$-reflexive,
\item \label{prop121126a5}
$A$ is derived $A$-reflexive and $\supp_R(A)=\spec(R)$, and
\item \label{prop121126a6}
$U^{-1}A$ is semidualizing over $U^{-1}R$ for each multiplicatively closed $U\subseteq R$.
\end{enumerate}
Indeed, in addition to~\cite[Proposition 3.1]{avramov:rrc1}, it suffices to note that the implications
$\eqref{prop121126a1}\implies\eqref{prop121126a2}\implies\eqref{prop121126a3}$ are straightforward.
\end{fact}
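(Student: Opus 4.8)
The plan is to run the cycle $\eqref{prop121126a1}\Rightarrow\eqref{prop121126a2}\Rightarrow\eqref{prop121126a3}\Rightarrow\eqref{prop121126a1}$ and then attach the remaining conditions by proving $\eqref{prop121126a1}\Longleftrightarrow\eqref{prop121126a4}$, $\eqref{prop121126a1}\Longleftrightarrow\eqref{prop121126a5}$, and $\eqref{prop121126a1}\Longleftrightarrow\eqref{prop121126a6}$. The tool used throughout is that derived Hom of homologically finite complexes commutes with localization: taking a bounded-below resolution $F\res A$ by degreewise-finite free modules and a bounded representative of $B$, one has $\Rhom AB\simeq\Hom FB$, each term of which is a finite direct sum of finitely generated modules, so $(\Rhom AB)_{\p}\simeq\Rhom[R_{\p}]{A_{\p}}{B_{\p}}$ for every prime $\p$; moreover the homothety and biduality morphisms are compatible with this localization. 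I will also use the standard functorial relation between the biduality morphism $\delta^A_A$ and the homothety $\chi^R_A$, together with Lemma~\ref{lem121125b}.

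Most of the implications are then bookkeeping. The implication $\eqref{prop121126a1}\Rightarrow\eqref{prop121126a2}$ is the definition of semidualizing, and $\eqref{prop121126a2}\Rightarrow\eqref{prop121126a3}$ follows by localizing the isomorphism $\Rhom AA\simeq R$ at each maximal ideal. Identifying $\Rhom RA$ with $A$ turns $\delta^A_R$ into $\chi^R_A$, and the finiteness conditions in ``$R$ is derived $A$-reflexive'' hold because $A$ is homologically finite; hence $\eqref{prop121126a1}$ and $\eqref{prop121126a4}$ coincide. For $\eqref{prop121126a1}\Longleftrightarrow\eqref{prop121126a5}$: if $A$ is semidualizing then $\Rhom AA\simeq R\not\simeq 0$ forces $A_{\p}\not\simeq 0$ for every prime $\p$, so $\supp_R(A)=\spec(R)$, while the functorial relation makes $\delta^A_A$ an isomorphism; conversely, if $A$ is derived $A$-reflexive with $\supp_R(A)=\spec(R)$, the same relation makes $\Rhom{\chi^R_A}{A}$ an isomorphism, so localizing at each maximal ideal $\m$ (where $A_{\m}\not\simeq 0$) Lemma~\ref{lem121125b} shows $\chi^{R_{\m}}_{A_{\m}}$, hence $\chi^R_A$, is an isomorphism. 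Finally $\eqref{prop121126a6}\Rightarrow\eqref{prop121126a1}$ is immediate (take $U$ to consist of units), and $\eqref{prop121126a1}\Rightarrow\eqref{prop121126a6}$ holds because the homothety and homological finiteness survive the localization $R\to U^{-1}R$.

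The one step that requires a genuine idea, and the place I expect the real difficulty to be, is $\eqref{prop121126a3}\Rightarrow\eqref{prop121126a1}$: one must pass from an \emph{abstract} isomorphism $\Rhom[R_{\m}]{A_{\m}}{A_{\m}}\simeq R_{\m}$, which need not be realized by the homothety, to the statement that $\chi^{R_{\m}}_{A_{\m}}$ is an isomorphism. Working over the local ring $R_{\m}$, the hypothesis says $\Rhom[R_{\m}]{A_{\m}}{A_{\m}}$ is concentrated in homological degree $0$ and that $T:=\HH_0\Rhom[R_{\m}]{A_{\m}}{A_{\m}}$ is a unital $R_{\m}$-algebra that is free of rank one as an $R_{\m}$-module. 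Writing $1_T=c\,t_0$ with $t_0$ a free generator of $T$, and $t_0\cdot t_0=d\,t_0$, the identity $1_T\cdot t_0=t_0$ yields $cd\,t_0=t_0$, hence $cd=1$, so $c$ is a unit; therefore the ring homomorphism $\HH_0(\chi^{R_{\m}}_{A_{\m}})\colon R_{\m}\to T$, $r\mapsto r\cdot\id_{A_{\m}}$, is bijective. Since source and target of $\chi^{R_{\m}}_{A_{\m}}$ are concentrated in degree $0$, this forces $\chi^{R_{\m}}_{A_{\m}}$ to be an isomorphism in $\catd(R_{\m})$; as $(\chi^R_A)_{\m}=\chi^{R_{\m}}_{A_{\m}}$ for every maximal ideal $\m$, the homothety $\chi^R_A$ is an isomorphism and $A$ is semidualizing. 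Alternatively one can cite \cite[Proposition~3.1]{avramov:rrc1} for this implication --- and indeed for most of the equivalences above --- so that only $\eqref{prop121126a1}\Rightarrow\eqref{prop121126a2}\Rightarrow\eqref{prop121126a3}$ remains to be spelled out; that is the route I would take in writing this up.
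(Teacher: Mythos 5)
Your proposal is correct, and its closing sentence --- cite \cite[Proposition 3.1]{avramov:rrc1} and spell out only $\eqref{prop121126a1}\implies\eqref{prop121126a2}\implies\eqref{prop121126a3}$ --- is precisely the paper's entire justification of this Fact, so in the form you say you would write it up the two proofs coincide. What you add is a self-contained verification of everything the citation conceals, and those arguments check out: identifying $\delta^A_R$ with $\chi^R_A$ via Hom-cancellation gives $\eqref{prop121126a1}\iff\eqref{prop121126a4}$; the diagram relating $\delta^A_A$ to $\Rhom{\chi^R_A}{A}$, combined with Lemma~\ref{lem121125b} applied at each maximal ideal where $A_{\m}\not\simeq 0$, gives $\eqref{prop121126a5}\implies\eqref{prop121126a1}$ --- this is the same mechanism the paper uses in proving Theorem~\ref{thm121124e}; and your rank-one endomorphism-algebra computation for $\eqref{prop121126a3}\implies\eqref{prop121126a1}$ (writing $1_T=ct_0$ and $t_0^2=dt_0$ in $T=\HH_0(\Rhom[R_\m]{A_\m}{A_\m})$, deducing $cd=1$ from freeness, hence that $c$ is a unit and $\HH_0(\chi^{R_\m}_{A_\m})$ is bijective) is exactly the elementary point needed to upgrade an \emph{abstract} isomorphism $\Rhom[R_\m]{A_\m}{A_\m}\simeq R_\m$ to the assertion that the homothety itself is an isomorphism. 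You also correctly flagged the one technical prerequisite, namely that commuting $\Rhom{A}{-}$ with localization requires a bounded-below degreewise-finite free resolution of $A$ against a bounded representative of the second argument, so that each term of the Hom complex is a finite direct sum of finitely generated modules. The trade-off is the usual one: your direct route makes the Fact genuinely self-contained and shows exactly where homological finiteness enters, while the paper's citation keeps the exposition short by outsourcing all but the two straightforward implications.
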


\begin{rmk}\label{rmk121129a}
Assume  that $R_1$ and $R_2$ are commutative noetherian rings such that $R\cong R_1\times R_2$.
Using the natural idempotents in $R$, one checks readily that every $R$-complex is isomorphic to one of the
form $X_1\times X_2$ where $X_i$ is an $R_i$-complex for $i=1,2$. 

For $i=1,2$ let $X_i$, $Y_i$, and $Z_i$ be $R_i$-complexes. Recall that there are natural isomorphisms in $\catd(R)$:
\begin{align*}
\Rhom{X_1\times X_2}{Y_1\times Y_2}
&\simeq\Rhom[R_1]{X_1}{Y_1}\times\Rhom[R_2]{X_2}{Y_2}
\\
\Lotimes{(X_1\times X_2)}{(Y_1\times Y_2)}
&\simeq(\Lotimes[R_1]{X_1}{Y_1})\times(\Lotimes[R_2]{X_2}{Y_2}).
\end{align*}
From this, it follows that
\begin{enumerate}[\rm(a)]
\item\label{rmk121129a1}
$X_1\times X_2$ is semidualizing for $R$ if and only if each $X_i$ is semidualizing for $R_i$.
\item\label{rmk121129a2}
$\Rhom{X_1\times X_2}{Y_1\times Y_2}$ is semidualizing for $R$ if and only if $\Rhom{X_i}{Y_i}$ is semidualizing for $R_i$ for $i=1,2$.
\item\label{rmk121129a3}
$X_1\times X_2$ is derived $Y_1\times Y_2$-reflexive if and only if each $X_i$ is derived $Y_i$-reflexive.
\item\label{rmk121129a4}
$\Rhom{X_1\times X_2}{Y_1\times Y_2}$ is derived $Z_1\times Z_2$-reflexive if and only if 
the complex $\Rhom{X_i}{Y_i}$ is derived $Z_i$-reflexive for $i=1,2$.
\item\label{rmk121129a5}
$X_1\times X_2\in\catb_{Y_1\times Y_2}(R)$ if and only if $X_i\in\catb_{Y_i}(R_i)$ for $i=1,2$.
\item\label{rmk121129a6}
$X_1\times X_2\in\cata_{Y_1\times Y_2}(R)$ if and only if $X_i\in\cata_{Y_i}(R_i)$ for $i=1,2$.
\end{enumerate}
\end{rmk}

\begin{defn}\label{defn121129a}
The \emph{semidualizing locus} of a homologically finite $R$-complex $A$ is
$$\sd_R(A):=\{\p\in\spec(R)\mid\text{$A_{\p}$ is semidualizing for $R_{\p}$}\}.$$
\end{defn}

\begin{rmk}\label{rmk121129b}
Let $A$ be a homologically finite $R$-complex.
Then we have 
$$\spec(R)\ssm\supp_R(\cone(\chi^R_A))=\sd_R(A)\subseteq\supp_R(A).$$
Also, $A$ is semidualizing for $R$ if and only if $\sd_R(A)=\spec(R)$; see Fact~\ref{prop121126a}.
\end{rmk}

\begin{lem}\label{lem121129a}
Let $A$ be a homologically finite $R$-complex such that $\Rhom AA$ is homologically finite,
i.e., such that $\Ext iAA=0$ for $i\gg 0$.
Then $\sd_R(A)$ is Zariski open in $\spec(R)$. 
\end{lem}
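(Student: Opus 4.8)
The plan is to realize $\sd_R(A)$ as the complement of the support of a single homologically finite complex, namely the cone of the homothety morphism. By Remark~\ref{rmk121129b} we have the identity
\[
\sd_R(A)=\spec(R)\ssm\supp_R(\cone(\chi^R_A)),
\]
so it suffices to prove that $\supp_R(\cone(\chi^R_A))$ is Zariski closed. Writing $C:=\cone(\chi^R_A)$, the key observation is that the standing hypothesis $\Ext iAA=0$ for $i\gg 0$ is exactly what forces $C$ to be homologically finite.

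First I would check that $C$ is homologically finite. There is a distinguished triangle $R\xra{\chi^R_A}\Rhom AA\to C\to\shift R$ in $\catd(R)$. The complex $R$ is homologically finite, and $\Rhom AA$ is homologically finite by hypothesis; the long exact sequence in homology then shows that each $\HH_i(C)$ sits between finitely generated modules (so is finitely generated, as $R$ is noetherian) and that $\HH_i(C)=0$ for $|i|\gg 0$. Hence $C$ is homologically finite.

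Next, for a homologically finite complex $C$ one has $C_{\p}\not\simeq 0$ if and only if $\HH_i(C)_{\p}\neq 0$ for some $i$, so
\[
\supp_R(C)=\bigcup_i\supp_R(\HH_i(C))=\bigcup_i V(\ann_R\HH_i(C)),
\]
a finite union of closed sets, hence closed. Therefore $\sd_R(A)=\spec(R)\ssm\supp_R(C)$ is open, as desired.

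There is essentially no serious obstacle here; the only subtlety is recognizing that the hypothesis on $\Rhom AA$ is precisely what is needed to pull $C$ into the homologically finite world, after which the argument is the standard fact that supports of homologically finite complexes are closed. It may be worth remarking that without this finiteness hypothesis the conclusion can genuinely fail, which is why it is imposed.
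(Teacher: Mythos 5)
Your proposal is correct and follows the same route as the paper: both identify $\sd_R(A)$ as the complement of $\supp_R(\cone(\chi^R_A))$ via Remark~\ref{rmk121129b}, note that the cone is homologically finite because $\Rhom AA$ is, and conclude that its support is closed. You simply spell out the details (the distinguished triangle argument and the description of the support as a finite union of $V(\ann_R\HH_i)$) that the paper leaves implicit.
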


\begin{proof}
As $\Rhom AA$ is homologically finite, so is the mapping cone $\cone(\chi^R_A)$.
So, the set $\sd_R(A)=\spec(R)\ssm\supp_R(\cone(\chi^R_C))$ is open in $\spec(R)$.
\end{proof}

\section{Results}\label{sec121130b}

We begin this section with the local version of our main results.

\begin{thm}\label{thm121124e}
Assume that $R$ is local, and let $A$ be a homologically finite $R$-complex.
Then the following conditions are equivalent:
\begin{enumerate}[\rm(i)]
\item \label{thm121124e1}
$A$ is semidualizing for $R$,
\item \label{thm121124e2}
$\Rhom AA$ is semidualizing for $R$,
\item \label{thm121124e3}
$A$ is derived $A$-reflexive and $A\not\simeq 0$,
\item \label{thm121124e4}
$\Rhom AA$ is derived $A$-reflexive and $A\not\simeq 0$,
\item \label{thm121124e5}
$A\in\catb_A(R)$ and $A\not\simeq 0$, and
\item \label{thm121124e6}
$R\in\cata_A(R)$.
\end{enumerate}
\end{thm}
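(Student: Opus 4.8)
The plan is to show that each of (i)--(vi) is equivalent to the single assertion that the homothety morphism $\chi^R_A\colon R\to\Rhom AA$ is an isomorphism — which, since $A$ is homologically finite, is condition~(i). The engine is a handful of triangle identities relating the canonical morphisms $\chi^R_A$, $\delta^A_A$, $\delta^A_{\Rhom AA}$ and $\xi^A_A$, together with Lemma~\ref{lem121125b}, which over the \emph{local} ring $R$ promotes ``isomorphism after applying $\Lotimes A{-}$, or $\Rhom A{-}$, or $\Rhom{-}{A}$'' to ``isomorphism'' (this is the one place locality is used). Note also that each of (i)--(vi) forces $A\not\simeq 0$, since $R\not\simeq 0$.

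All the implications from~(i) are routine: $A$ semidualizing gives $\Rhom AA\simeq R$, which is~(ii), and, via Fact~\ref{prop121126a} (or via the triangle identities below), gives that $A$ and $\Rhom AA$ are derived $A$-reflexive and that $A\in\catb_A(R)$, $R\in\cata_A(R)$. For the converses of (iii), (v), (vi) and the step (iv)$\Rightarrow$(iii): first, (vi) is literally~(i), because under the canonical isomorphism $\Lotimes AR\simeq A$ the morphism $\gamma^A_R$ from the definition of $\cata_A(R)$ in Definition~\ref{defn121124a} is identified with $\chi^R_A$. Next one uses that the composites $A\xrightarrow{\delta^A_A}\Rhom{\Rhom AA}{A}\xrightarrow{\Rhom{\chi^R_A}{A}}\Rhom RA\simeq A$, $\ \Lotimes AR\xrightarrow{\Lotimes A{\chi^R_A}}\Lotimes A{\Rhom AA}\xrightarrow{\xi^A_A}A$, and $\Rhom AA\xrightarrow{\delta^A_{\Rhom AA}}\Rhom{\Rhom{\Rhom AA}{A}}{A}\xrightarrow{\Rhom{\delta^A_A}{A}}\Rhom AA$ are, respectively, $\mathrm{id}_A$, $\mathrm{id}_A$ (under $\Lotimes AR\simeq A$), and $\mathrm{id}_{\Rhom AA}$ (the triangle identity for the duality functor $\Rhom{-}{A}$). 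Hence, if (iii) holds then $\delta^A_A$ is an isomorphism, so $\Rhom{\chi^R_A}{A}$ is, so $\chi^R_A$ is (Lemma~\ref{lem121125b}, since $\Rhom AA$ is homologically finite by hypothesis); if (v) holds then $\xi^A_A$ is an isomorphism, so $\Lotimes A{\chi^R_A}$ is, so $\chi^R_A$ is; and if (iv) holds then $\delta^A_{\Rhom AA}$ is an isomorphism, so $\Rhom{\delta^A_A}{A}$ is, so (Lemma~\ref{lem121125b}, since $\Rhom{\Rhom AA}{A}$ is homologically finite by hypothesis) $\delta^A_A$ is an isomorphism, which is~(iii).

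The remaining implication $(ii)\Rightarrow(i)$ is the one requiring a different idea, and I expect it to be the main obstacle. Composition of endomorphisms makes $\Rhom AA$ a ring object in $\catd(R)$ whose unit is $\chi^R_A\colon R\to\Rhom AA$, and the homothety of $\Rhom AA$ factors through this unit: writing $\lambda\colon\Rhom AA\to\Rhom{\Rhom AA}{\Rhom AA}$ for the left regular representation — the morphism adjoint to the multiplication $\Lotimes{\Rhom AA}{\Rhom AA}\to\Rhom AA$ — one has $\chi^R_{\Rhom AA}=\lambda\circ\chi^R_A$, both sides being adjoint to $\mathrm{id}_{\Rhom AA}$. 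By~(ii) the morphism $\chi^R_{\Rhom AA}$ is an isomorphism, so $\chi^R_A$ is a split monomorphism in $\catd(R)$; splitting the resulting idempotent gives $\Rhom AA\simeq R\oplus C'$ with $\chi^R_A$ the inclusion of the first summand and $C'$ homologically finite (being a summand of $\Rhom AA$). Applying $\Rhom{-}{-}$ to this splitting and using~(ii) again yields $R\simeq\Rhom{\Rhom AA}{\Rhom AA}\simeq R\oplus C'\oplus\Rhom{C'}{R}\oplus\Rhom{C'}{C'}$; since the three extra summands have finitely generated homology in every degree, comparing homology degreewise and applying Nakayama in degree~$0$ forces $C'\simeq 0$, so $\chi^R_A$ is an isomorphism and $A$ is semidualizing. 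The delicate points here — a model-independent construction of the multiplication on $\Rhom AA$ and of $\lambda$, the identity $\chi^R_{\Rhom AA}=\lambda\circ\chi^R_A$, and the idempotent splitting within homologically finite complexes — are, I expect, the only genuinely non-formal parts of the argument.
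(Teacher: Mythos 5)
Your proposal is correct, and for the implications involving (iii), (iv), (v), and (vi) it is essentially identical to the paper's proof: the same three triangle identities (which the paper draws as commutative diagrams, citing \cite[2.2]{avramov:rrc1} for the one involving $\delta^A_{\Rhom AA}$), followed by the same appeal to Lemma~\ref{lem121125b} to cancel $\Lotimes A{-}$, $\Rhom A{-}$, or $\Rhom{-}A$ --- which is exactly where the paper, like you, uses locality. The one place you genuinely diverge is $\eqref{thm121124e2}\Rightarrow\eqref{thm121124e1}$. Both arguments begin by exhibiting $\chi^R_A$ as a split monomorphism: the paper does this via a commutative square identifying $\chi^R_{\Rhom AA}$ with $\Rhom{\xi^A_A}A\circ\chi^R_A$ through Hom-tensor adjointness, which is the same unitality statement as your $\chi^R_{\Rhom AA}=\lambda\circ\chi^R_A$ in different notation, so the ``delicate point'' you flag is no more delicate than what the paper itself takes for granted. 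From there the paper reads off the coefficientwise inequalities $P^R_{\Rhom AA}(t)\succeq 1$ and $I_R^{\Rhom AA}(t)\succeq I_R^R(t)$ and combines them with the formula $I_R^R(t)=P^R_{\Rhom AA}(t)\,I_R^{\Rhom AA}(t)$ for semidualizing complexes from \cite[1.5]{frankild:rrhffd} to force $P^R_{\Rhom AA}(t)=1$, hence $\Rhom AA\simeq R$. You instead split off the cone, $\Rhom AA\simeq R\oplus C'$ with $C'=\cone(\chi^R_A)$, apply $\Rhom{-}{-}$ once more, and kill $C'$ degreewise by Nakayama; this is sound, since $C'$ is homologically finite (because $\Rhom AA$ is, by (ii)), the extra summands have degreewise finitely generated homology over the noetherian local ring, and $R\cong R\oplus M$ with $M$ finitely generated forces $M=0$. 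Your route is more elementary in that it avoids Bass and Poincar\'e series and the citation to \cite{frankild:rrhffd} entirely, at the cost of the extra bookkeeping with the idempotent splitting; both arguments deliver the stronger conclusion $\Rhom AA\simeq R$ before invoking Fact~\ref{prop121126a}.
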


\begin{proof}
Note that if $A$ is semidualizing for $R$, then $A\not\simeq 0$ since $0\simeq\Rhom 00\not\simeq R$.
Similarly, if $\Rhom AA$ is semidualizing for $R$, then $A\not\simeq 0$.
Thus, for $n=$ii,iii,iv,v, the implications
$\eqref{thm121124e1}\implies (n)$
are from~\cite[Theorem 1.3]{frankild:rbsc}.

$\eqref{thm121124e2}\implies\eqref{thm121124e1}$
Assume that $\Rhom AA$ is semidualizing for $R$, and consider the following commutative diagram in $\catd(R)$.
$$\xymatrix@C=15mm{
R\ar[r]^-{\chi^R_{\Rhom AA}}_-{\simeq}\ar[d]_{\chi^R_A}
&\Rhom{\Rhom AA}{\Rhom AA}\ar[d]^{\simeq} \\
\Rhom AA\ar[r]^-{\Rhom{\xi^A_A}A}
&\Rhom{\Lotimes A{\Rhom AA}}A}$$
The unspecified isomorphism is Hom-tensor adjointness. 
From this, it follows that there is a monomorphism $R\into \HH_0(\Rhom AA)$, so 
$\HH_0(\Rhom AA)\neq 0$. 
From this, we conclude that a minimal free resolution $F\simeq\Rhom AA$ has $F_0\neq 0$.
Thus, there is a coefficient-wise inequality $P^R_{\Rhom AA}(t)\succeq 1$.

From the above diagram, it follows that the composition $\Rhom{\xi^A_A}A\circ \chi^R_A$ is an isomorphism, hence, so is the induced morphism
$$\Rhom k{\Rhom{\xi^A_A}A\circ\chi^R_A}=
\Rhom k{\Rhom{\xi^A_A}A}\circ\Rhom k{\chi^R_A}$$
where $k$ is the residue field of $R$.
In particular, the induced map on homology
$$\Ext ikR\to\Ext ik{\Rhom AA}$$
is a monomorphism for each $i$.
This explains the first coefficient-wise inequality in the next sequence:
\begin{align*}
I_R^{\Rhom AA}(t)
&\succeq I^R_R(t)
=P^R_{\Rhom AA}(t)I_R^{\Rhom AA}(t)
\succeq I_R^{\Rhom AA}(t).
\end{align*}
The equality follows from the fact that $\Rhom AA$ is semidualizing, by~\cite[1.5]{frankild:rrhffd}.
The second coefficient-wise inequality is from the condition $P^R_{\Rhom AA}(t)\succeq 1$ established above.
It follows that we have a coefficient-wise equality
$$P^R_{\Rhom AA}(t)I_R^{\Rhom AA}(t)
= I_R^{\Rhom AA}(t)$$
From this, we conclude that $P^R_{\Rhom AA}(t)=1$, so $\Rhom AA\simeq R$ and $A$ is semidualizing by Fact~\ref{prop121126a}.

$\eqref{thm121124e3}\implies\eqref{thm121124e1}$
Assume that $A$ is derived $A$-reflexive and $A\not\simeq 0$.
It follows that $\Rhom AA$ is homologically finite.
Consider the following commutative diagram in $\catd(R)$ where the unspecified isomorphism is Hom-cancellation.
$$\xymatrix{
A\ar[r]^-{\delta^A_A}_-{\simeq}\ar[d]_=
&\Rhom{\Rhom AA}A\ar[d]^{\Rhom{\chi^R_A}A} \\
A&\Rhom RA\ar[l]^-\simeq}$$
It follows that $\Rhom{\chi^R_A}A$ is an isomorphism in $\catd(R)$, so Lemma~\ref{lem121125b}
implies that $\chi^R_A$ is an isomorphism in $\catd(R)$, thus $A$ is semidualizing.

$\eqref{thm121124e4}\implies\eqref{thm121124e3}$
Assume that $\Rhom AA$ is derived $A$-reflexive and $A\not\simeq 0$. 
In particular, the biduality morphism 
$$\delta^A_{\Rhom AA}\colon {\Rhom AA}\to\Rhom{\Rhom {\Rhom AA}A}A$$ 
is an isomorphism in $\catd(R)$.
From~\cite[2.2]{avramov:rrc1} we conclude that $\Rhom{\delta^A_A}A$ is an isomorphism in $\catd(R)$.
Since $A$ and $\Rhom{\Rhom AA}{A}$ are both homologically finite by assumption, Lemma~\ref{lem121125b}
implies that $\delta^A_A$ is an isomorphism in $\catd(R)$.
It follows that $A$ is derived $A$-reflexive.

$\eqref{thm121124e5}\implies\eqref{thm121124e1}$
Assume that $A\in\catb_A(R)$, and consider the commutative diagram
$$\xymatrix@C=12mm{
\Lotimes AR\ar[r]^-{\Lotimes A{\chi_A^A}}
\ar[d]_\simeq
&\Lotimes A{\Rhom AA}\ar[ld]^{\xi^A_A}\\
A}$$
As in the previous paragraphs, Lemma~\ref{lem121125b} implies that $A$ is semidualizing.

$\eqref{thm121124e1}\iff\eqref{thm121124e6}$
This follows readily from the next commutative diagram in $\catd(R)$.
$$\xymatrix{R\ar[r]^-{\xi_R^A}\ar[d]_-{\chi^R_A}&\Rhom{A}{\Lotimes AR}\ar[ld]^-{\simeq} \\
\Rhom AA}$$
See also~\cite[(4.4) Proposition]{christensen:scatac} for one implication.
\end{proof}

\begin{rmk}\label{rmk121128a}
In Theorem~\ref{thm121124e}
the implications
$\eqref{thm121124e6}\implies\eqref{thm121124e1}\implies (n)$
for $n=$ii,iii,iv,v,vi do not use the local assumption. 
The point of much of the remainder of this paper is that the implications
$(n)\implies\eqref{thm121124e1}$ fail in general for $n=$ii,iii,iv,v. Moreover, we explicitly characterize the failure of these implications. 
\end{rmk}

The proof of the next result is similar to the proof of~\cite[Theorem 3.2]{frankild:sdcms}.

\begin{thm}\label{thm121127a}
Assume that $R$ is local, and
let $A$ be a homologically finite $R$-complex.
Then $0\not\simeq A\in\cata_A(R)$ if and only if $A\sim R$.
\end{thm}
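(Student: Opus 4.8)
The statement to prove is: if $R$ is local and $A$ is a homologically finite $R$-complex, then $0 \not\simeq A \in \cata_A(R)$ if and only if $A \sim R$. The ``if'' direction is immediate: if $A \simeq \shift^i R$ for some integer $i$, then $R$ is semidualizing, so $A$ is a shift of a semidualizing complex, and it is standard (via Hom-cancellation/Foxby equivalence) that $A \in \cata_A(R)$; moreover $A \not\simeq 0$. So the content is the forward direction, and the plan is to run a Poincaré-series argument entirely analogous to the proof of the implication $\eqref{thm121124e2}\implies\eqref{thm121124e1}$ in Theorem~\ref{thm121124e}.

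The key steps for the forward direction are as follows. First, from $A \in \cata_A(R)$ we have the defining isomorphism $\gamma^A_A \colon A \xra{\simeq} \Rhom A{\Lotimes AA}$, together with the homological boundedness of $\Lotimes AA$. Second, I would extract a numerical consequence by applying $\Lotimes k{-}$ (equivalently, computing with the residue field $k$): using associativity/adjointness of derived tensor and Hom over a local ring, one gets relations among the Poincaré series $P^R_A(t)$ and $P^R_{\Lotimes AA}(t)$. Concretely, $P^R_{\Lotimes AA}(t) = P^R_A(t) \cdot P^R_A(t) \cdot (\text{something})$ — more precisely, one uses $\Lotimes k{(\Lotimes AA)} \simeq (\Lotimes kA) \otimes_k (\Lotimes[k]kA)$ after base change, which over the local ring gives $P^R_{\Lotimes AA}(t) = P^R_A(t)^2$ up to the appropriate normalization; and applying $\Rhom k{-}$ to the isomorphism $\gamma^A_A$ together with Hom-tensor adjointness gives $P^R_A(t) = P^R_A(t) \cdot P^R_{\Lotimes AA}(t)$ (or an inequality in that direction, from which equality follows by a $\succeq 1$ bound as in the model proof). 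Third, combining these forces $P^R_A(t)^2 = 1$ as formal Laurent series, hence $P^R_A(t)$ is a single monomial $t^i$, which means $\Lotimes kA \simeq \shift^i k$, i.e.\ $A$ has a ``rank-one'' minimal free resolution and $A \simeq \shift^i R$, that is, $A \sim R$.

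Alternatively — and this is probably cleaner — I would avoid series bookkeeping and instead mimic the diagram-chase style used elsewhere in Theorem~\ref{thm121124e}. Note $R \in \cata_R(R)$ trivially, and by Theorem~\ref{thm121124e}\,\eqref{thm121124e6}$\iff$\eqref{thm121124e1} applied with $A$ in place of $A$: the hypothesis $A \in \cata_A(R)$ does not directly say $R \in \cata_A(R)$, so I cannot invoke that equivalence verbatim. Instead I would set up the commutative square relating $\gamma^A_A$, the evaluation morphism $\xi^A_{\Lotimes AA}$, and $\chi^R_A$, analogous to the square in $\eqref{thm121124e2}\implies\eqref{thm121124e1}$, to produce a split injection $R \into \HH_0(A \lotimes_R \Rhom AA)$ forcing nonvanishing, and then compare Bass or Poincaré series to squeeze out $P^R_A(t) = t^i$. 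Either way the structure is: (1) nonvanishing/finiteness reductions; (2) a series identity from the Auslander-class isomorphism; (3) the degree count forcing a monomial Poincaré series; (4) conclude $A \sim R$.

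The main obstacle I anticipate is step (2): getting the precise series identity right. The subtlety is that $\Lotimes AA$ is a genuine complex (not a module), so one must be careful that homological boundedness of $\Lotimes AA$ — part of the definition of $\cata_A(R)$ — is actually needed to make the Poincaré series a well-defined Laurent series, and one must track suspensions carefully so that ``$P^R_A(t)^2 = 1$'' comes out with the correct normalization rather than, say, a spurious shift. I would lean on the Bass/Poincaré-series computations of~\cite[Lemma (1.5.3)]{avramov:rhafgd} (already cited in the proof of Lemma~\ref{lem121125a}) and the semidualizing series identity of~\cite[1.5]{frankild:rrhffd} used in Theorem~\ref{thm121124e}, which should make this bookkeeping routine once set up correctly. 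Everything else — the ``if'' direction, the nonvanishing reductions, and the final passage from a monomial Poincaré series to $A \simeq \shift^i R$ via minimality of free resolutions over a local ring — is standard.
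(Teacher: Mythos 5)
Your ``if'' direction and the various reductions are fine, but the heart of your forward implication --- the series identity in your step (2) --- has a genuine gap, and it is not one that careful bookkeeping will close. The formulas actually available from \cite[Lemma (1.5.3)]{avramov:rhafgd} are $P^R_{\Lotimes XY}(t)=P^R_X(t)\cdot P^R_Y(t)$ and $I_R^{\Rhom XY}(t)=P^R_X(t)\cdot I_R^Y(t)$; there is \emph{no} product formula for the Poincar\'e series of a derived Hom, so the identity $P^R_A(t)=P^R_{\Rhom A{\Lotimes AA}}(t)=P^R_A(t)\cdot P^R_{\Lotimes AA}(t)$ that you need is unjustified. (Already for $Y=R$ and $X$ of finite projective dimension one has $P^R_{\Rhom XR}(t)=P^R_X(t^{-1})$, not $P^R_X(t)\cdot P^R_R(t)$.) The Bass-series identity that \emph{is} valid reads $I_R^A(t)=P^R_A(t)\cdot I_R^{\Lotimes AA}(t)$, but now you need to express $I_R^{\Lotimes AA}(t)$ in terms of $I_R^A(t)$ and $P^R_A(t)$, and no such formula exists; together with the correct relation $P^R_{\Lotimes AA}(t)=P^R_A(t)^2$ you have two usable identities in three unknown series, which do not combine to force $P^R_A(t)^2=1$. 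Your computation would go through if $\pd_RA$ were finite, since then $\Rhom A{-}\simeq\Lotimes{\Rhom AR}{-}$ gives $P^R_A(t)P^R_A(t^{-1})=1$ --- but finiteness of $\pd_RA$ is essentially the conclusion, not a hypothesis. Your fallback diagram-chase variant ends with the same ``compare Bass or Poincar\'e series'' step and hits the same wall.

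The paper's proof is structural rather than numerical, and the extra idea is worth recording. Take a minimal free resolution $P\simeq A$ (normalized so $P_0\neq0$, $P_i=0$ for $i<0$). Hom-tensor adjointness converts the quasiisomorphism $\Hom P{\gamma^P_P}$ into a quasiisomorphism $\Delta\colon\Hom PP\to\Hom{\Otimes PP}{\Otimes PP}$ given by $f\mapsto\Otimes Pf$. In particular the commutativity involution $\Theta_{P,P}$ of $\Otimes PP$, being a degree-$0$ cycle in the target, must be homotopic to $\Otimes Pf$ for some chain map $f$. Reducing modulo $\m$ --- minimality kills the differentials, so homotopic maps become equal --- yields the equality of $k$-linear maps $\Otimes[k]{\ol P}{\ol f}=\Theta_{\ol P,\ol P}$, and elementary linear algebra shows that a map of the form $x\otimes y\mapsto x\otimes f(y)$ can agree with $x\otimes y\mapsto(-1)^{|x||y|}y\otimes x$ only if $\ol{P_0}\cong k$ and $\ol{P_i}=0$ for $i\neq0$; hence $P\cong R$. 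The proof thus uses the actual shape of the map $\Theta_{P,P}$, not just ranks, and that is precisely the information a Poincar\'e-series argument discards.
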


\begin{proof}
One implication is straightforward: if $A\sim R$, then $\cata_A(R)$ contains all homologically bounded complexes, so $A\in\cata_A(R)$ and $0\not\simeq R\sim A$.

For the converse, assume that $0\not\simeq A\in\cata_A(R)$. Shift $A$ if necessary to assume that 
$\inf\{n\in\bbz\mid\HH_n(A)\neq 0\}=0$.
Let $P\simeq A$ be a minimal free resolution of $A$. 
It follows that $P_i=0$ for all $i<0$ and $P_0\neq 0$.
The condition $P\simeq A\in \cata_A(R)$ implies that the natural map $\gamma^P_P\colon P\to\Hom P{\Otimes PP}$ is a quasiisomorphism,
hence it induces the quasiisomorphism in the top row of the next commutative diagram of chain maps
where the unspecified isomorphism is Hom-tensor adjointness.
$$\xymatrix@C=19mm{
\Hom PP \ar[r]^-{\Hom P{\gamma^P_P}}_-{\simeq}\ar[rd]_{\Delta}
&\Hom P{\Hom P{\Otimes PP}}\ar[d]^{\cong} \\
&\Hom{\Otimes PP}{\Otimes PP}}$$
In degree 0, the composition $\Delta$ is given by $f\mapsto \Otimes Pf$.
The diagram shows that $\Delta$ is a quasiisomorphism.

Given two $R$-complexes $X$ and $Y$, let $\Theta_{X,Y}\colon\Otimes XY\to\Otimes YX$ by the natural commutativity isomorphism
$\Otimes[] xy\mapsto(-1)^{|x||y|}\Otimes[] yx$. This is a chain map, hence the fact that $\Delta$ is a quasiisomorphism implies that there is a
chain map $f\colon P\to P$ such that $\Otimes Pf\colon \Otimes PP\to\Otimes PP$ is homotopic to $\Theta_{P,P}$.

Let $k$ be the residue field of $R$, and set $\ol{(-)}=\Otimes k-$. The previous paragraph implies that 
$\ol{\Otimes Pf}\colon \ol{\Otimes PP}\to\ol{\Otimes PP}$ is homotopic to $\ol{\Theta_{P,P}}$.
Using the natural isomorphism $\ol{\Otimes --}\cong\Otimes[k]{\ol -}{\ol -}$, it follows that
$\Otimes[k] {\ol{P}}{\ol{f}}\colon \Otimes[k] {\ol{P}}{\ol{P}}\to\Otimes[k] {\ol{P}}{\ol{P}}$ is homotopic to $\Theta_{\ol{P},\ol{P}}$.
Since $P$ is minimal, the differentials on $\ol P$ and $\Otimes[k] {\ol{P}}{\ol{P}}$ are 0, and it follows that
$\Otimes[k] {\ol{P}}{\ol{f}}=\Theta_{\ol{P},\ol{P}}\colon \Otimes[k] {\ol{P}}{\ol{P}}\to\Otimes[k] {\ol{P}}{\ol{P}}$.

We first show that $\ol{P_0}\cong k$. Since $\ol{P_0}$ is a non-zero $k$-vector space, it suffices to show that
$\rank_k(\ol{P_0})\leq 1$. Suppose that $r=\rank_k(\ol{P_0})\geq 2$, and let $x_1,\ldots,x_r\in\ol{P_0}$ be a basis.
It follows that $\Otimes[k]{\ol{P_0}}{\ol{P_0}}$ has rank $r^2$ with basis $\{\Otimes[]{x_i}{x_j}\mid i,j=1,\ldots,r\}$.
The equality $\Otimes[k] {\ol{P}}{\ol{f}}=\Theta_{\ol{P},\ol{P}}$ implies that
$$\Otimes[]{x_2}{x_1}=\Otimes[]{x_1}{f(x_2)}\in\operatorname{Span}_k\{\Otimes[]{x_1}{x_1},\ldots,\Otimes[]{x_1}{x_r}\}$$
contradicting the linear independence of the given basis for $\Otimes[k]{\ol{P_0}}{\ol{P_0}}$.

We now show that $\ol{P_i}=0$ for all $i\neq 0$. (It then follows that $A\simeq P\cong R$, as desired.)
Let $i\geq 1$ and $y\in\ol{P_i}$. With $x_1$ as in the previous paragraph, 
the equality $\Otimes[k] {\ol{P}}{\ol{f}}=\Theta_{\ol{P},\ol{P}}$ implies that
$$0=\Otimes[]{y}{x_1}-\Otimes[]{x_1}{f(y)}\in(\Otimes[k]{\ol{P_i}}{\ol{P_0}})\oplus(\Otimes[k]{\ol{P_0}}{\ol{P_i}}).$$
Since $i\neq 0$, we have $(\Otimes[k]{\ol{P_0}}{\ol{P_i}})\cap(\Otimes[k]{\ol{P_i}}{\ol{P_0}})=0$,
so we conclude that $\Otimes[]{y}{x_1}=0$ in $\Otimes[k]{\ol{P_i}}{\ol{P_0}}$.
Since $0\neq x_1$ in the vector space $\ol{P_0}$, it follows that $y=0$.
The element $y\in\ol{P_i}$ was chosen arbitrarily, so we conclude that 
$\ol{P_i}=0$, as desired.
\end{proof}

Next, we present our non-local results.

\begin{cor}\label{thm121124b'}
Let $A$ be a homologically finite $R$-complex.
Then the following conditions are equivalent:
\begin{enumerate}[\rm(i)]
\item \label{thm121124b'1}
$A$ is semidualizing for $R$,
\item \label{thm121124b'2}
$\Rhom AA$ is semidualizing for $R$, 
\item \label{thm121124b'3}
$A$ is derived $A$-reflexive and $\supp_R(A)=\spec(R)$,
\item \label{thm121124b'4}
$\Rhom AA$ is derived $A$-reflexive and $\supp_R(A)=\spec(R)$,
\item \label{thm121124b'5}
$A\in\catb_A(R)$ and $\supp_R(A)=\spec(R)$, and
\item \label{thm121124b'6}
$R\in\cata_A(R)$.
\end{enumerate}
\end{cor}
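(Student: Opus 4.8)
The plan is to reduce the global statement Corollary~\ref{thm121124b'} to the local version Theorem~\ref{thm121124e}, using the characterizations of the various properties in terms of localization that are already recorded in the excerpt. First I would establish the two ``easy'' equivalences that do not require any localization argument. The implication $\eqref{thm121124b'1}\iff\eqref{thm121124b'6}$ follows exactly as in the proof of $\eqref{thm121124e1}\iff\eqref{thm121124e6}$: the commutative triangle built from $\chi^R_A$, $\xi^A_R$, and the Hom-tensor adjointness isomorphism $\Rhom{A}{\Lotimes AR}\simeq\Rhom AA$ shows $\chi^R_A$ is an isomorphism precisely when $\gamma^A_R$ is, and that argument used no local hypothesis (as noted in Remark~\ref{rmk121128a}). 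Likewise $\eqref{thm121124b'1}\implies$ each of $\eqref{thm121124b'2}$--$\eqref{thm121124b'5}$ is already handled without localization: by Remark~\ref{rmk121128a} the implications $\eqref{thm121124e1}\implies(n)$ for $n=$ii,iii,iv,v do not use the local assumption, and combining with $\supp_R(A)=\spec(R)$ (which holds whenever $A$ is semidualizing, by Fact~\ref{prop121126a}\eqref{prop121126a5}) gives $\eqref{thm121124b'1}\implies\eqref{thm121124b'3},\eqref{thm121124b'4},\eqref{thm121124b'5}$, and similarly $\eqref{thm121124b'1}\implies\eqref{thm121124b'2}$.

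The substance is the reverse implications $(n)\implies\eqref{thm121124b'1}$ for $n=$ii,iii,iv,v. Here the strategy is: localize at an arbitrary maximal ideal $\m$, check that the hypothesis in condition $(n)$ localizes to the corresponding local hypothesis over $R_\m$, apply Theorem~\ref{thm121124e} to conclude $A_\m$ is semidualizing over $R_\m$, and then invoke Fact~\ref{prop121126a}, specifically the equivalence $\eqref{prop121126a1}\iff\eqref{prop121126a3}$, to conclude $A$ is semidualizing over $R$. Two points need care. First, ``derived $A$-reflexive'' and ``$\in\catb_A(R)$'' localize correctly: localization is exact and commutes with $\Rhom{-}{-}$ and $\Lotimes{-}{-}$ on homologically finite complexes, and it sends finitely generated homology to finitely generated homology, so $\delta^A_N$, $\xi^A_N$, $\chi^R_A$ all localize to their analogues; moreover $\Rhom AA$ homologically finite (needed so that conditions iii, iv even make sense locally via $\Rhom[R_\m]{A_\m}{A_\m}$) follows from $A$ derived $A$-reflexive, as in the proof of $\eqref{thm121124e3}\implies\eqref{thm121124e1}$, or from $\Rhom AA$ semidualizing. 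Second, the nonvanishing hypothesis $A\not\simeq0$ needed to apply Theorem~\ref{thm121124e} must be supplied locally: this is exactly where $\supp_R(A)=\spec(R)$ enters, since it says $A_\m\not\simeq0$ for every maximal $\m$.

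For the case $n=$ii one must observe at the start that if $\Rhom AA$ is semidualizing for $R$ then automatically $\supp_R(A)=\spec(R)$ --- otherwise $A_\m\simeq0$ for some $\m$, forcing $\Rhom[R_\m]{A_\m}{A_\m}\simeq0\not\simeq R_\m$, contradicting Remark~\ref{rmk121129a}\eqref{rmk121129a1} or Fact~\ref{prop121126a}\eqref{prop121126a3}; so condition~\eqref{thm121124b'2} implicitly carries the support condition, just as in the local statement $A\not\simeq0$ came for free. Then localizing, $\Rhom[R_\m]{A_\m}{A_\m}$ is semidualizing for $R_\m$, so Theorem~\ref{thm121124e}$\eqref{thm121124e2}\implies\eqref{thm121124e1}$ gives $A_\m$ semidualizing, and Fact~\ref{prop121126a} finishes. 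The cases $n=$iii, iv, v are identical in structure, using implications $\eqref{thm121124e3}$, $\eqref{thm121124e4}$, $\eqref{thm121124e5}\implies\eqref{thm121124e1}$ respectively, with the $A\not\simeq0$ locally again coming from $\supp_R(A)=\spec(R)$.

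I expect the only real obstacle to be bookkeeping: verifying cleanly that each of the four reflexivity/class conditions, together with the ancillary finiteness of $\Rhom AA$, passes to localizations, and in the $n=$ii case spotting that the support condition is automatic. None of this is deep once Theorem~\ref{thm121124e} and Fact~\ref{prop121126a} are in hand; the proof should be short, essentially ``localize, apply the local theorem at each maximal ideal, glue back with Fact~\ref{prop121126a}.''
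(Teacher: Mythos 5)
Your proposal is correct and follows essentially the same route as the paper: note that conditions (i), (ii), (vi) force $\supp_R(A)=\spec(R)$, get $\eqref{thm121124b'6}\iff\eqref{thm121124b'1}\implies(n)$ from Remark~\ref{rmk121128a}, and obtain $(n)\implies\eqref{thm121124b'1}$ by localizing, applying Theorem~\ref{thm121124e} (with $A_\m\not\simeq 0$ supplied by the support hypothesis), and gluing via Fact~\ref{prop121126a}. The paper's proof is just a terser version of exactly this argument, leaving the localization bookkeeping you spell out to the reader.
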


\begin{proof}
Note that conditions \eqref{thm121124b'1}, \eqref{thm121124b'2}, and \eqref{thm121124b'6} all imply that $\supp_R(A)=\spec(R)$
since $A$ is homologically finite.
The implications $\eqref{thm121124b'6}\iff\eqref{thm121124b'1}\implies(n)$ for $n=$ii,iii,iv,v follow from Remark~\ref{rmk121128a}.
For the implications $(n)\implies\eqref{thm121124b'1}$  with $n=$ii,iii,iv,v, note that condition $(n)$ localizes; 
since the semidualizing property is local by Fact~\ref{prop121126a},
the desired conclusion follows from Theorem~\ref{thm121124e}.
\end{proof}

The next result is proved like the previous one, via Theorem~\ref{thm121127a}.

\begin{cor}\label{thm121129a}
Let $A$ be a homologically finite $R$-complex.
Then $A\in\cata_A(R)$ and $\supp_R(A)=\spec(R)$ if and only if $A$ is a tilting $R$-complex.
\end{cor}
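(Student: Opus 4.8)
The plan is to mirror the proof of Corollary~\ref{thm121124b'}, using Theorem~\ref{thm121127a} as the local input in place of Theorem~\ref{thm121124e}. The overall strategy is to reduce the statement to its local form: the property ``$A\in\cata_A(R)$'' localizes, the condition $\supp_R(A)=\spec(R)$ says precisely that $A_\p\not\simeq 0$ for every $\p\in\spec(R)$, and being a tilting complex is a local property (a semidualizing complex of finite projective dimension, and both ``semidualizing'' and ``finite projective dimension'' can be checked at maximal ideals). So I would prove the two implications separately, using localization to pass between the global and local pictures.

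For the forward direction, suppose $A\in\cata_A(R)$ and $\supp_R(A)=\spec(R)$. First I would recall that a tilting complex is in particular semidualizing, so by Fact~\ref{prop121126a} it suffices to show $A_\m$ is semidualizing of finite projective dimension over $R_\m$ for each maximal ideal $\m$. Fix such an $\m$. Since the Auslander class condition localizes, $A_\m\in\cata_{A_\m}(R_\m)$, and since $\supp_R(A)=\spec(R)$ we have $A_\m\not\simeq 0$. Now Theorem~\ref{thm121127a} applies over the local ring $R_\m$ and gives $A_\m\sim R_\m$, i.e.\ $A_\m\simeq\shift^i R_\m$ for some integer $i$. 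Such a complex is clearly semidualizing over $R_\m$ and has finite (in fact zero) projective dimension. Since this holds for every $\m$, Fact~\ref{prop121126a} (parts \eqref{prop121126a1} and \eqref{prop121126a3}) shows $A$ is semidualizing over $R$, and the local finite-projective-dimension conditions assemble to give $A$ of finite projective dimension globally (projective dimension can be computed locally); hence $A$ is tilting.

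For the converse, suppose $A$ is a tilting $R$-complex. Then $A$ is semidualizing, so $\supp_R(A)=\spec(R)$ by Remark~\ref{rmk121129b} (or because $A$ is homologically finite and $\Rhom AA\simeq R$). It remains to see $A\in\cata_A(R)$. One clean way: a tilting complex of finite projective dimension satisfies $A_\m\simeq\shift^{i(\m)}R_\m$ locally at each maximal ideal by the local case already established (or directly, since a semidualizing complex of finite projective dimension is shift-isomorphic to $R$ locally), and being in the Auslander class is detected locally, so it suffices to check $A_\m\in\cata_{A_\m}(R_\m)$, which is immediate since $A_\m\sim R_\m$ and $\cata_{R_\m}(R_\m)$ contains every homologically bounded complex. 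Alternatively one can observe directly that for a tilting complex the functors $\Lotimes A{-}$ and $\Rhom A{-}$ are quasi-inverse equivalences, so every homologically bounded complex lies in $\cata_A(R)$.

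I expect the main obstacle to be purely bookkeeping rather than conceptual: one must be careful that ``tilting'' is genuinely a local condition (checking that finite projective dimension descends from the localizations at maximal ideals, which is standard but should be cited), and one must make sure the shift index $i(\m)$ in $A_\m\simeq\shift^{i(\m)}R_\m$ does not cause trouble — it need not be globally constant, but this is harmless since tiltingness, semidualizingness, and membership in the Auslander class are all insensitive to such local shifts. No delicate estimates as in Theorem~\ref{thm121127a} are needed here; the heavy lifting has already been done in that theorem, and this corollary is its globalization exactly as Corollary~\ref{thm121124b'} globalizes Theorem~\ref{thm121124e}.
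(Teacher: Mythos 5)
Your proposal is correct and follows essentially the same route as the paper: the paper proves this corollary "like the previous one, via Theorem~\ref{thm121127a}," i.e.\ by localizing at maximal ideals, applying Theorem~\ref{thm121127a} to get $A_\m\sim R_\m$, and invoking the local characterization of tilting complexes (cf.\ \cite[Proposition 4.4]{frankild:rbsc}, cited in the proof of Theorem~\ref{thm121124c}). Your bookkeeping remarks about the shift indices and the local-to-global passage for finite projective dimension are exactly the points that citation absorbs.
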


As we show in~\ref{proof121130a} below, the next result is the key to proving
Theorem~\ref{thm121124a}.

\begin{thm}\label{thm121124b}
Let $A$ be a homologically finite $R$-complex.
Then the following conditions are equivalent:
\begin{enumerate}[\rm(i)]
\item \label{thm121124b1}
There are non-zero commutative noetherian rings $R_1,R_2$ with identity
such that $R\cong R_1\times R_2$, and there is a semidualizing $R_1$-complex $A_1$
such that $A\cong A_1\times 0$,
\item \label{thm121124b3}
$A$ is derived $A$-reflexive and not semidualizing such that $A\not\simeq 0$,
\item \label{thm121124b4}
$\Rhom AA$ is derived $A$-reflexive, $A$ is not semidualizing, and $A\not\simeq 0$, and
\item \label{thm121124b5}
$0\not\simeq A\in\catb_A(R)$ and $A$ is not semidualizing.
\end{enumerate}
In particular, when the above conditions are satisfied, $\spec(R)$ is disconnected.
\end{thm}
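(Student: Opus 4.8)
The plan is to handle the implication $\eqref{thm121124b1}\implies(n)$ for $n=$ii,iii,iv separately from the three converses $(n)\implies\eqref{thm121124b1}$. The forward implication is a routine unwinding of the product decomposition through Remark~\ref{rmk121129a}; for the converses I would run a single uniform argument showing that the semidualizing locus $\sd_R(A)$ is both open and closed in $\spec(R)$ and is a proper nonempty subset, and then recover the splitting $R\cong R_1\times R_2$ from the idempotent this clopen set determines. The final assertion needs nothing extra: once \eqref{thm121124b1} holds we have $R\cong R_1\times R_2$ with $R_1,R_2\neq 0$, so $\spec(R)=\spec(R_1)\sqcup\spec(R_2)$ is a disjoint union of two nonempty clopen sets.

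For $\eqref{thm121124b1}\implies(n)$, assume $R\cong R_1\times R_2$ with $R_1,R_2\neq 0$ and $A\cong A_1\times 0$ with $A_1$ semidualizing for $R_1$. Then $A\not\simeq 0$ because a semidualizing complex is nonzero, and $A$ is not semidualizing for $R$ by Remark~\ref{rmk121129a}\eqref{rmk121129a1}, since $0$ is not semidualizing for $R_2\neq 0$. Theorem~\ref{thm121124e} applied over $R_1$ shows that $A_1$ is derived $A_1$-reflexive and $A_1\in\catb_{A_1}(R_1)$, while Fact~\ref{prop121126a} shows that $R_1\simeq\Rhom[R_1]{A_1}{A_1}$ is derived $A_1$-reflexive over $R_1$; the $R_2$-components are the zero complex, which trivially satisfies each of these conditions over $R_2$. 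Plugging these into parts \eqref{rmk121129a3}, \eqref{rmk121129a4}, \eqref{rmk121129a5} of Remark~\ref{rmk121129a} yields \eqref{thm121124b3}, \eqref{thm121124b4}, and \eqref{thm121124b5} for $A\cong A_1\times 0$.

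For the converses, suppose $A$ satisfies one of \eqref{thm121124b3}, \eqref{thm121124b4}, \eqref{thm121124b5}. First I would check that $\Rhom AA$ is homologically finite in each case: this is explicit in \eqref{thm121124b3} and \eqref{thm121124b4}, and under \eqref{thm121124b5} it follows because $\Rhom AA$ is homologically bounded by hypothesis and has finitely generated cohomology since $A$ is homologically finite. Hence $\sd_R(A)$ is Zariski open by Lemma~\ref{lem121129a}. The crux is then to prove $\sd_R(A)=\supp_R(A)$: the inclusion $\sd_R(A)\subseteq\supp_R(A)$ is Remark~\ref{rmk121129b} (or note that $A_\p\simeq 0$ precludes $A_\p$ being semidualizing over $R_\p\neq 0$), and conversely, for $\p\in\supp_R(A)$ we have $A_\p\not\simeq 0$ and the hypothesis at hand localizes to $R_\p$ exactly as in the proof of Corollary~\ref{thm121124b'}, so $A_\p$ is derived $A_\p$-reflexive, or $\Rhom[R_\p]{A_\p}{A_\p}$ is derived $A_\p$-reflexive, or $A_\p\in\catb_{A_\p}(R_\p)$, whence Theorem~\ref{thm121124e} forces $A_\p$ to be semidualizing for $R_\p$. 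Since $A$ is homologically finite, $\supp_R(A)$ is closed, so $\sd_R(A)=\supp_R(A)$ is clopen; it is nonempty since $A\not\simeq 0$, and proper in $\spec(R)$ since $A$ is not semidualizing (Remark~\ref{rmk121129b}). The corresponding idempotent splits $R\cong R_1\times R_2$ with $R_1,R_2\neq 0$ and $\spec(R_1)=\supp_R(A)$; writing $A\cong A_1\times A_2$ (Remark~\ref{rmk121129a}), we get $\supp_{R_2}(A_2)=\emptyset$ so $A_2\simeq 0$, while $\sd_{R_1}(A_1)=\spec(R_1)$, so $A_1$ is semidualizing for $R_1$ by Remark~\ref{rmk121129b}. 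This is \eqref{thm121124b1}.

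The main obstacle I anticipate is the converse direction, and within it the bookkeeping needed to make $\sd_R(A)$ clopen: one must verify that $\Rhom AA$ is homologically finite in the Bass-class case, that derived $A$-reflexivity of $A$ and of $\Rhom AA$ and membership in $\catb_A(R)$ each commute with localization, and---most importantly---that each of these three conditions already implies the semidualizing property in the \emph{local} setting, which is precisely what Theorem~\ref{thm121124e} supplies. After that, closedness of $\supp_R(A)$ and openness from Lemma~\ref{lem121129a} do the rest, and both $\eqref{thm121124b1}\implies(n)$ and the final ``in particular'' are formal consequences of Remark~\ref{rmk121129a}.
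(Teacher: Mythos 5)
Your proposal is correct and follows essentially the same route as the paper: the forward implications via Remark~\ref{rmk121129a}, and the converses by showing $\sd_R(A)=\supp_R(A)$ is a proper nonempty clopen subset of $\spec(R)$ (openness from Lemma~\ref{lem121129a}, the equality from the local Theorem~\ref{thm121124e}) and then splitting $R$ along the resulting idempotent. Your explicit verification that $\Rhom AA$ is homologically finite in the Bass-class case is a detail the paper leaves implicit under ``proved similarly,'' but it is the same argument.
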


\begin{proof}
$\eqref{thm121124b1}\implies\eqref{thm121124b3}$
Assume that there are non-zero commutative noetherian rings $R_1,R_2$ with identity
such that $R\cong R_1\times R_2$, and  that there is a semidualizing $R_1$-complex $A_1$
such that $A\cong A_1\times 0$.
Since $R_2\neq 0$, we conclude that $0$ is not semidualizing for $R_2$, so Remark~\ref{rmk121129a}\eqref{rmk121129a1} implies
that $A$ is not semidualizing for $R$.
Since $A_1$ is semidualizing for $R_1\neq 0$, we conclude that $A\not\simeq 0$, and that
$A$ is derived $A$-reflexive by Remarks~\ref{rmk121129a}\eqref{rmk121129a3} and~\ref{rmk121128a}.

$\eqref{thm121124b3}\implies\eqref{thm121124b1}$
Assume that $A$ is derived $A$-reflexive and not semidualizing such that $A\not\simeq 0$.
In particular, the complex $\Rhom AA$ is homologically finite.
Lemma~\ref{lem121129a} implies that $\sd_R(A)$ is an open subset of $\spec(R)$.

We claim that $\sd_R(A)=\supp_R(A)$. One containment is from Remark~\ref{rmk121129b}.
For the reverse containment, let $\p\in\supp_R(A)$. It follows that $A_{\p}\not\simeq 0$ is totally $A_{\p}$-reflexive,
so Theorem~\ref{thm121124e} implies that $A_{\p}$ is semidualizing for $R_{\p}$, i.e., $\p\in\sd_R(A)$.

It follows that $\sd_R(A)=\supp_R(A)$ is both open and closed in $\spec(R)$. 
Since $A$ is not semidualizing, Remark~\ref{rmk121129b} shows that $\sd_R(A)=\supp_R(A)\neq\spec(R)$.
On the other hand, since $A\not\simeq 0$, we have $\sd_R(A)=\supp_R(A)\neq\emptyset$.
It follows that $\spec(R)=\supp_R(A)\uplus(\spec(R)\ssm\supp_R(A))$ is a disconnection of $\spec(R)$.
A standard result implies that there are  commutative rings $R_1$ and $R_2$ such that
\begin{enumerate}[(1)]
\item\label{item111229a}
$R\cong R_1\times R_2$, and
\item\label{item111229b}
under the natural bijection $\spec(R)\cong\spec(R_1)\uplus\spec(R_2)$, the set $\supp_R(A)$ corresponds to $\spec(R_1)$,
and $\spec(R)\ssm\supp_R(A)$ corresponds to $\spec(R_2)$.
\end{enumerate}
Remark~\ref{rmk121129a} implies that for $i=1,2$ there is an $R_i$-complex $A_i$ such that $A\simeq A_1\times A_2$.
Under the natural bijection $\spec(R)\cong\spec(R_1)\uplus\spec(R_2)$, for each $P\in\spec(R)$ and its corresponding prime
$\p_i\in\spec(R_i)$, we have $A_P\simeq (A_i)_{\p_i}$. Using condition~\eqref{item111229b} above, 
it follows that
\begin{enumerate}[(3)]
\item[(3)]
for each $\p_1\in\spec(R_1)$, corresponding to $P\in\supp_R(A)=\sd_R(A)$, since $A_P$ is semidualizing for $R_P$,
the complex $(A_1)_{\p_1}$ is semidualizing for $(R_1)_{\p_1}$, and
\item[(4)]
for each $\p_2\in\spec(R_2)$ corresponding to $P\in\spec(R)\ssm\supp_R(A)$, we have $(A_2)_{\p_2}\simeq A_P\simeq 0$.
\end{enumerate}
Because of condition~(3), Fact~\ref{prop121126a} implies that $A_1$ is semidualizing for $R_1$.
And condition~(4) implies that $\supp_{R_2}(A_2)=\emptyset$, so $A_2\simeq 0$, as desired.

For $n=$iii,iv, the equivalence $\eqref{thm121124b1}\iff(n)$ is proved similarly.
\end{proof}

\begin{thm}\label{thm121124c}
Let $A$ be a homologically finite $R$-complex.
Then the following conditions are equivalent:
\begin{enumerate}[\rm(i)]
\item \label{thm121124c1}
$0\not\simeq A\in\cata_A(R)$ and $A$ is not semidualizing for $R$, and
\item \label{thm121124c2}
there are non-zero commutative noetherian rings $R_1,R_2$ with identity
such that $R\cong R_1\times R_2$, and there is a tilting $R_1$-complex $A_1$
such that $A\cong A_1\times 0$.
\end{enumerate}
\end{thm}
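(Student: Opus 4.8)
The plan is to follow the proof of Theorem~\ref{thm121124b} almost verbatim, with Theorem~\ref{thm121127a} and Corollary~\ref{thm121129a} taking over the roles played there by Theorem~\ref{thm121124e} and Fact~\ref{prop121126a}.

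For the implication $\eqref{thm121124c2}\implies\eqref{thm121124c1}$, suppose $R\cong R_1\times R_2$ with $R_1,R_2\neq 0$ and $A\cong A_1\times 0$ where $A_1$ is tilting for $R_1$. Being tilting, $A_1$ is semidualizing, hence $A_1\not\simeq 0$, so $A\not\simeq 0$; and since $0$ is not semidualizing for $R_2\neq 0$, Remark~\ref{rmk121129a}\eqref{rmk121129a1} shows that $A$ is not semidualizing for $R$. Finally, Corollary~\ref{thm121129a} gives $A_1\in\cata_{A_1}(R_1)$, while $0\in\cata_0(R_2)$ trivially, so $A\cong A_1\times 0\in\cata_A(R)$ by Remark~\ref{rmk121129a}\eqref{rmk121129a6}.

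For $\eqref{thm121124c1}\implies\eqref{thm121124c2}$, assume $0\not\simeq A\in\cata_A(R)$ with $A$ not semidualizing. The condition $A\in\cata_A(R)$ localizes, so for each $\p\in\supp_R(A)$ we get $0\not\simeq A_{\p}\in\cata_{A_{\p}}(R_{\p})$, whence $A_{\p}\sim R_{\p}$ by Theorem~\ref{thm121127a}; in particular $\Rhom[R_{\p}]{A_{\p}}{A_{\p}}\simeq R_{\p}$. For $\p\notin\supp_R(A)$ we have $A_{\p}\simeq 0$, so $\Rhom[R_{\p}]{A_{\p}}{A_{\p}}\simeq 0$. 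Since $A$ is homologically finite, each $\Ext iAA$ is a finitely generated $R$-module with $(\Ext iAA)_{\p}\cong\HH^i(\Rhom[R_{\p}]{A_{\p}}{A_{\p}})$, so these computations force $\Ext iAA=0$ for all $i\neq 0$. Hence $\Rhom AA$ is homologically finite, and Lemma~\ref{lem121129a} shows that $\sd_R(A)$ is open in $\spec(R)$. Moreover $\sd_R(A)=\supp_R(A)$: the containment $\subseteq$ is Remark~\ref{rmk121129b}, and if $\p\in\supp_R(A)$ then $A_{\p}\sim R_{\p}$ is semidualizing for $R_{\p}$, giving $\p\in\sd_R(A)$. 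Thus $\supp_R(A)$ is both open and closed; it is non-empty since $A\not\simeq 0$ and proper since $A$ is not semidualizing (Remark~\ref{rmk121129b}), so $\spec(R)=\supp_R(A)\uplus(\spec(R)\ssm\supp_R(A))$ is a disconnection.

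As in the proof of Theorem~\ref{thm121124b}, the standard structure theorem now produces commutative noetherian rings $R_1,R_2\neq 0$ with $R\cong R_1\times R_2$ such that $\spec(R_1)$ corresponds to $\supp_R(A)$, and Remark~\ref{rmk121129a} yields $A\simeq A_1\times A_2$ for $R_i$-complexes $A_i$. Then $\supp_{R_2}(A_2)=\emptyset$, so $A_2\simeq 0$, while $\supp_{R_1}(A_1)=\spec(R_1)$ and $A_1\in\cata_{A_1}(R_1)$ by Remark~\ref{rmk121129a}\eqref{rmk121129a6}; hence $A_1$ is tilting for $R_1$ by Corollary~\ref{thm121129a}, as desired. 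The one step requiring genuine care is the verification that $\Rhom AA$ is homologically finite, so that Lemma~\ref{lem121129a} applies; the remainder is a routine transcription of the argument for Theorem~\ref{thm121124b}.
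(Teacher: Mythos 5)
Your proof is correct and follows essentially the same route as the paper, whose own proof simply directs the reader to repeat the argument of Theorem~\ref{thm121124b} with Theorem~\ref{thm121127a} supplying the local input (the paper quotes the local characterization of tilting complexes from the literature where you instead use Corollary~\ref{thm121129a}, an immaterial difference). The one point you rightly single out --- verifying that $\Rhom AA$ is homologically finite so that Lemma~\ref{lem121129a} applies, which in Theorem~\ref{thm121124b} came for free from derived reflexivity --- is handled correctly by your prime-by-prime vanishing argument.
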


\begin{proof}
From~\cite[Proposition 4.4]{frankild:rbsc}, we know that $A$ is tilting if and only if $A_{\m}\sim R_{\m}$
for each maximal ideal $\m\subset R$. Thus, the desired implications 
follow from Theorem~\ref{thm121127a}
as in the proof of Theorem~\ref{thm121124b}.
\end{proof}

\begin{para}[Proof of Theorem~\ref{thm121124a}] \label{proof121130a}
Let $A$ be a non-zero totally $A$-reflexive $R$-module that is not semidualizing.
Then $A$ is derived $A$-reflexive and not semidualizing such that $A\not\simeq 0$,
so the desired conclusion follows from Theorem~\ref{thm121124b}. This uses the fact that if $A\simeq A_1\times 0$,
then $A_1$ is isomorphic in $\catd(R)$ to a module and $A\cong A_1\times 0$.
\qed
\end{para}

\begin{rmk}\label{rmk121201a}
Other results for modules can be deduced from our results for complexes.
We leave it as an exercise for the interested reader to formulate them.
\end{rmk}

We end with two consequences for integral domains that parallel our local results.

\begin{cor}\label{cor121130a}
Assume that $R$ is an integral domain, and let $A$ be a homologically finite $R$-complex.
Then the following conditions are equivalent:
\begin{enumerate}[\rm(i)]
\item \label{cor121130a1}
$A$ is a semidualizing $R$-complex,
\item \label{cor121130a2}
$A$ is derived $A$-reflexive and $A\not\simeq 0$,
\item \label{cor121130a3}
$\Rhom AA$ is derived $A$-reflexive and $A\not\simeq 0$, and
\item \label{cor121130a4}
$0\not\simeq A\in\catb_A(R)$.
\end{enumerate}
\end{cor}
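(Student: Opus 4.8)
\emph{Proof proposal.} The plan is to deduce this corollary directly from Theorems~\ref{thm121124b'} and~\ref{thm121124b}, in exactly the way that Theorem~\ref{thm121124e} treats the local case. The single structural input we need is that an integral domain is a \emph{connected} ring: since $R$ has no zero-divisors, any idempotent $e$ satisfies $e(e-1)=0$, hence $e=0$ or $e=1$; consequently $R$ admits no decomposition $R\cong R_1\times R_2$ with $R_1,R_2\neq 0$, equivalently $\spec(R)$ is connected (indeed irreducible, with generic point $(0)$).

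For the forward implications I would invoke Corollary~\ref{thm121124b'}. If $A$ is semidualizing for $R$, then by that result $A$ is derived $A$-reflexive with $\supp_R(A)=\spec(R)$, the complex $\Rhom AA$ is derived $A$-reflexive with $\supp_R(A)=\spec(R)$, and $A\in\catb_A(R)$ with $\supp_R(A)=\spec(R)$. Since $R\neq 0$ we have $\spec(R)\neq\emptyset$, so the condition $\supp_R(A)=\spec(R)$ forces $A\not\simeq 0$; hence \eqref{cor121130a1} implies each of \eqref{cor121130a2}, \eqref{cor121130a3}, and \eqref{cor121130a4}.

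For the converses I would argue by contradiction through Theorem~\ref{thm121124b}. Suppose \eqref{cor121130a2} holds but $A$ is not semidualizing; then $A$ is derived $A$-reflexive, not semidualizing, and $A\not\simeq 0$, which is precisely condition \eqref{thm121124b3} of Theorem~\ref{thm121124b}. That theorem then yields a decomposition $R\cong R_1\times R_2$ with both factors non-zero, contradicting the fact that a domain is connected. Therefore $A$ is semidualizing, giving \eqref{cor121130a1}. The implications $\eqref{cor121130a3}\implies\eqref{cor121130a1}$ and $\eqref{cor121130a4}\implies\eqref{cor121130a1}$ are obtained identically, using conditions \eqref{thm121124b4} and \eqref{thm121124b5} of Theorem~\ref{thm121124b}.

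I do not expect a genuine obstacle: the real content is already carried by Theorems~\ref{thm121124b'} and~\ref{thm121124b}, and what remains is the elementary observation about connectedness of $\spec$ of a domain. The only point that warrants a moment's care is matching hypotheses: the clauses ``$A\not\simeq 0$'' in \eqref{cor121130a2}--\eqref{cor121130a4} agree verbatim with those in Theorem~\ref{thm121124b}, so no separate support computation is needed when passing between the two statements.
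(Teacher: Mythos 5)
Your proposal is correct and follows essentially the same route as the paper's own proof: the converse implications are obtained by feeding conditions (ii)--(iv) into Theorem~\ref{thm121124b} and contradicting the domain hypothesis via the resulting nontrivial decomposition $R\cong R_1\times R_2$, and the forward implications are the global ones (the paper cites Remark~\ref{rmk121128a} where you cite Corollary~\ref{thm121124b'}, but this is the same content).
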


\begin{proof}
$\eqref{cor121130a2}\implies\eqref{cor121130a1}$
Assume that $A$ is derived $A$-reflexive and $A\not\simeq 0$.
If $A$ is not semidualizing, then 
Theorem~\ref{thm121124b} provides  a non-trivial decomposition $R\cong R_1\times R_2$, contradicting the assumption that $R$ is a domain.

The remaining implications follow similarly, using Remark~\ref{rmk121128a}.
\end{proof}

The next result is proved like the previous one, using Theorem~\ref{thm121124c}.

\begin{cor}\label{cor121130b}
Assume that $R$ is an integral domain, and let $A$ be a homologically finite $R$-complex.
Then 
$0\not\simeq A\in\cata_A(R)$
if and only if
$A$ is a tilting $R$-complex.
\end{cor}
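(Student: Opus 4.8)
The plan is to derive Corollary~\ref{cor121130b} from Theorem~\ref{thm121124c} in exactly the same way Corollary~\ref{cor121130a} was derived from Theorem~\ref{thm121124b}. The key observation is that the prime spectrum of an integral domain is connected, so no non-trivial product decomposition $R\cong R_1\times R_2$ with $R_1,R_2\neq 0$ can exist; this rules out the ``exceptional'' alternative appearing in these structure theorems and forces the good conclusion.

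First I would handle the direction that needs an argument. Assume $0\not\simeq A\in\cata_A(R)$. Suppose for contradiction that $A$ is not tilting. Then condition~\eqref{thm121124c1} of Theorem~\ref{thm121124c} is satisfied, so by \eqref{thm121124c1}$\implies$\eqref{thm121124c2} there exist non-zero commutative noetherian rings $R_1,R_2$ with $R\cong R_1\times R_2$ (and a tilting $R_1$-complex $A_1$ with $A\cong A_1\times 0$). But $R\cong R_1\times R_2$ with both factors non-zero makes $\spec(R)$ disconnected via the associated idempotents, contradicting the hypothesis that $R$ is an integral domain. Hence $A$ is tilting.

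For the converse, suppose $A$ is a tilting $R$-complex. Then $A$ is semidualizing of finite projective dimension; in particular $A\not\simeq 0$ (as noted at the start of the proof of Theorem~\ref{thm121124e}, $0$ is not semidualizing). Moreover, being tilting, $A$ is shift-isomorphic to $R$ locally at every prime, so $A\sim R$ after localizing; more directly, one can invoke Corollary~\ref{thm121129a}: a tilting complex certainly has $\supp_R(A)=\spec(R)$ since it is homologically finite and semidualizing, and the backward implication of Corollary~\ref{thm121129a} gives $A\in\cata_A(R)$. Thus $0\not\simeq A\in\cata_A(R)$, completing the equivalence.

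I do not expect any genuine obstacle here: the entire content is packaged in Theorem~\ref{thm121124c}, and the only additional ingredient is the elementary fact that a domain has connected spectrum. The one point requiring a moment's care is making sure the converse is stated cleanly — that a tilting complex really does lie in its own Auslander class — but this is immediate from Corollary~\ref{thm121129a} (or directly from Theorem~\ref{thm121124e}\eqref{thm121124e6} after localizing, since $\cata_A(R)$ membership is a local condition and a tilting complex is locally shift-isomorphic to the ring). So the proof is essentially two short paragraphs mirroring the proof of Corollary~\ref{cor121130a}.
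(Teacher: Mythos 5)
Your overall strategy is exactly the paper's: the published proof of this corollary is simply ``proved like the previous one, using Theorem~\ref{thm121124c},'' i.e., the nontrivial product decomposition in Theorem~\ref{thm121124c}\eqref{thm121124c2} would disconnect $\spec(R)$, which cannot happen for a domain. One step in your forward direction is stated too quickly, however. You assume $0\not\simeq A\in\cata_A(R)$ and that $A$ is \emph{not tilting}, and then assert that condition~\eqref{thm121124c1} of Theorem~\ref{thm121124c} is satisfied; but that condition requires $A$ to be \emph{not semidualizing}, which is a priori weaker than not tilting. The gap is closed by Corollary~\ref{thm121129a}, which you already invoke for the converse: if $A$ were semidualizing it would have $\supp_R(A)=\spec(R)$, and together with $A\in\cata_A(R)$ this would force $A$ to be tilting, a contradiction; hence $A$ is indeed not semidualizing and Theorem~\ref{thm121124c} applies. (Equivalently, run the contradiction against ``semidualizing'' rather than ``tilting'': if $A$ is not semidualizing, Theorem~\ref{thm121124c} disconnects $\spec(R)$; hence $A$ is semidualizing, has full support, and is tilting by Corollary~\ref{thm121129a}.) Your converse is fine; the only quibble is that the parenthetical appeal to Theorem~\ref{thm121124e}\eqref{thm121124e6} is not quite the right citation, since that item concerns $R\in\cata_A(R)$ rather than $A\in\cata_A(R)$ --- the relevant fact is the observation at the start of the proof of Theorem~\ref{thm121127a} that $A\sim R$ locally puts every homologically bounded complex, in particular $A$ itself, in the Auslander class.
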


\section*{Acknowledgments}
We thank Jon Totushek for posing the motivating question for this work.

\providecommand{\bysame}{\leavevmode\hbox to3em{\hrulefill}\thinspace}
\providecommand{\MR}{\relax\ifhmode\unskip\space\fi MR }
\providecommand{\MRhref}[2]{%
  \href{http://www.ams.org/mathscinet-getitem?mr=#1}{#2}
}
\providecommand{\href}[2]{#2}

\end{document}